\newcommand{\Z}{\mathbb{Z}}
\newcommand{\Q}{\mathbb{Q}}
\newcommand{\R}{\mathbb{R}}
\newcommand{\C}{\mathbb{C}}
\newcommand{\N}{\mathbb{N}}
\newcommand{\ord}{\mathfrak{o}}
\renewcommand{\imath}{i}
\newtheorem{definition}{Definition}
\newtheorem{theorem}{Theorem}
\newtheorem{lemma}{Lemma}
\newtheorem{proposition}{Proposition}
\newtheorem{claim}{Claim}
\theoremstyle{remark}
\newtheorem{remark}{Remark}
\renewcommand{\Re}{\mathrm{Re}}
\renewcommand{\Im}{\mathrm{Im}}
\begin{document}
 \title[Distinct unit generated fields]{On distinct unit generated fields that are totally complex}
\subjclass[2010]{} \keywords{}

\author[D. Dombek]{Daniel Dombek}
\address{D. Dombek \newline
\indent Department of Applied Mathematics\newline
\indent Faculty of Information Technology, CTU in Prague\newline
\indent Th\'akurova 9\newline
\indent 160 00 Prague 6, Czech Republic}
\email{daniel.dombek\char'100fit.cvut.cz}

\author[Z. Mas\'akov\'a]{Zuzana Mas\'akov\'a}
\address{Z. Mas\'akov\'a \newline
\indent Department of Mathematics\newline
\indent Faculty of Nuclear Sciences and Physical Engineering, CTU in Prague\newline
\indent Trojanova 13\newline
\indent 120 00 Prague 2, Czech Republic}
\email{zuzana.masakova\char'100fjfi.cvut.cz}

\author[V. Ziegler]{Volker Ziegler}
\address{V. Ziegler\newline
\indent Johann Radon Institute for Computational and Applied Mathematics (RICAM)\newline
\indent Austrian Academy of Sciences\newline
\indent Altenbergerstr. 69\newline
\indent A-4040 Linz, Austria}
\email{volker.ziegler\char'100ricam.oeaw.ac.at}

\subjclass[2010]{11R16,11R11,11A63,11R67}
\keywords{unit sum number; additive unit structure; digit expansions}

\begin{abstract}
We consider the problem of characterizing all number fields $K$ such that all algebraic integers $\alpha\in K$ can be written as the sum of distinct units
of $K$. We extend a method due to Thuswaldner and Ziegler \cite{Thuswaldner:2011} that previously did not work for totally complex fields and apply our results
to the case of totally complex quartic number fields.
\end{abstract}

\maketitle

\section{Introduction}

Jacobson~\cite{Jacobson:1964} observed in the 1960's that the two number fields
$\Q(\sqrt{2})$ and $\Q(\sqrt{5})$ share the property that every algebraic integer is the sum of distinct units.
Moreover, he conjectured that these two quadratic number fields are the only quadratic number fields with this
property. Let us call a field with this property a distinct unit generated field or DUG-field for short.

In the 1970's \'Sliwa~\cite{Sliwa:1974} solved this problem for quadratic
number fields and showed that even no pure cubic number field is DUG. These
results have been extended to cubic and quartic fields by
Belcher~\cite{Belcher:1974,Belcher:1976}. In particular, Belcher solved the
case of imaginary cubic number fields completely \cite{Belcher:1976}.

The problem of characterizing all number fields in which every algebraic
integer is a sum of distinct units is still unsolved. Let us note that this
problem is contained in Narkiewicz' list of open problems in his famous
book~\cite[see page 539, Problem~18]{Narkiewicz:1974}.

Recently Thuswaldner and Ziegler \cite{Thuswaldner:2011} used methods originating from the theory of number systems and enumeration and
obtained a new approach to the problem and introduced the following definition in order to measure how far is a number field
away from being a DUG-field.

\begin{definition}
Let $\ord$ be some order in a number field $K$ and $\alpha\in \ord$. Suppose $\alpha$ can be written as a linear combination of units
\[\alpha=a_1\epsilon_1 +\cdots+a_\ell \epsilon_\ell ,\]
such that $\epsilon_1,\ldots,\epsilon_\ell \in \ord^*$ are all distinct and
$a_1\geq \cdots \geq a_\ell>0$ are positive integers. Choose a representation with $a_1$ minimal, then we call
$\omega(\alpha)=a_1$ the unit sum height of $\alpha$. Moreover we define $\omega(0)=0$ and $\omega(\alpha)=\infty$ if $\alpha$ is not the sum of units.

We define
\[\omega(\ord)=\max\{\omega(\alpha) \: :\: \alpha\in\ord\}\]
if the maximum exists. If it does not exist we write
$\omega(\ord)=\omega$ in case of $\ord$ is generated by units and $\omega(\ord)=\infty$ otherwise.

In case of $\ord$ is the maximal order of $K$ we also write $\omega(K)=\omega(\ord)$.
\end{definition}

Unfortunately the method of Thuswaldner and Ziegler \cite{Thuswaldner:2011} only works for number fields which have a real embedding, i.e.\ which are not 
totally complex. Such fields contain a Pisot unit, which is essential for the tool provided there. Recall that an algebraic integer $\alpha>1$ is a Pisot 
number, if all its conjugates are of modulus less than 1. 

On the other hand, Hajdu and Ziegler~\cite{Hajdu:2013} focused on totally complex fields. For the case of quartic totally complex fields they provided the 
following list of candidates of DUG fields, where $\zeta_\mu$ denotes a primitive $\mu$-th root of unity.:

\begin{table}[ht]
\caption{Candidates for totally complex quartic DUG fields. Markers $\dag$ and $\ddag$ are necessary for the statement of 
Theorem~\ref{Th:main}.}\label{Tab:candidates}
%
\begin{itemize}
\item $\Q(\zeta_\mu)$ where $\mu=5,8,12$ or,
\item $\Q(\gamma)$ where $\gamma$ is the root of one of the polynomials $X^4-X+1$, $X^4+X^2-X+1$, $X^4+2X^2-2X+1^\dag$, $X^4-X^3+X+1^\ddag$,
$X^4-X^3+X^2+X+1^\ddag$, $X^4-X^3+2X^2-X+2^\dag$ or,
\item $\Q(\sqrt{a+b\zeta_4})$, with $(a,b)=(1,1),(1,2),(1,4),(7,4)^\dag$ or,
\item $\Q(\sqrt{a+b\zeta_3})$, with $(a,b)=(2,1)$, $(4,1)$, $(8,1)$, $(3,2)$, $(4,3)$, $(7,3)$, $(11,3)$, $(5,4)$, $(9,4)$, $(13,4)$, $(12,5)$, $(11,7)$,
$(9,8)$, $(15,11)$, $(19,11)^\dag$, $(17,12)^\dag$, $(17,16)^\dag$ or,
\item $\Q(\zeta_4,\sqrt{5})$ or $\Q(\zeta_3,\sqrt{d})$, with $d=5,6,21$ or,
\item $\Q\left(\sqrt{-1-\sqrt 2}\right)$ or $\Q\left(\sqrt{-\frac{1+\sqrt 5}2}\right)$.
\end{itemize}
\end{table}

They proved the following theorem.

\begin{theorem}[Hajdu, Ziegler \cite{Hajdu:2013}]\label{Th:complete}
If $K$ is a totally complex quartic field with $\omega(K)=1$, then it is equal to one of the fields in 
the list of Table~\ref{Tab:candidates}. 
\end{theorem}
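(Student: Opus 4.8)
The hypothesis $\omega(K)=1$ is exactly the assertion that $K$ is a DUG-field: every $\alpha\in\ord$ is a sum of pairwise distinct units. Since $K$ is totally complex quartic, Dirichlet's unit theorem gives unit rank $r_1+r_2-1=0+2-1=1$, so $\ord^*=\langle\zeta\rangle\times\langle\epsilon\rangle$, where $\zeta$ generates the finite cyclic group $\mu_K$ of roots of unity (with $|\mu_K|\in\{2,4,6,8,10,12\}$, the even $m$ satisfying $\varphi(m)\mid 4$) and $\epsilon$ is a fundamental unit. Because there are infinitely many totally complex quartic fields, the plan is two-staged: first I would derive necessary conditions for the DUG-property strong enough to confine $K$ to a finite, effectively listable set of fields; then I would enumerate that set and discard every field admitting a representation obstruction, the survivors being precisely Table~\ref{Tab:candidates}.

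Stage one is a \emph{local} obstruction. For a rational prime $p$, reduction modulo $p$ gives a surjection $\ord\to\ord/p\ord$, and if $\alpha=\sum_i\epsilon_i$ with the $\epsilon_i$ distinct units then $\bar\alpha=\sum_i\bar\epsilon_i$. The images $\bar\epsilon_i$ all lie in the finite multiplicative subgroup $G=\langle\bar\zeta,\bar\epsilon\rangle$ of $(\ord/p\ord)^*$, so the reduction of any sum of distinct units lies in the additive span $\langle G\rangle_{\Z/p}$; as $G$ is a multiplicative group this span is the $\Z/p$-subalgebra generated by $G$, and the DUG-property forces it to be all of $\ord/p\ord$, for otherwise any $\alpha$ whose reduction avoids it is not a sum of distinct units. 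For $p$ inert this says $\bar\zeta,\bar\epsilon$ must generate $\mathbb F_{p^4}$ as a field; for $p$ split, writing $\ord/p\ord\cong\prod_j\mathbb F_{q_j}$, it forces the coordinate projections of $G$ to generate each $\mathbb F_{q_j}$ and to be independent across the factors (not confined to a proper subproduct). This is an explicitly checkable congruence condition on how $\epsilon$ reduces, and it is the device by which individual candidate fields will be excluded.

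Stage two is \emph{finiteness}. Using the two complex places, the logarithmic embedding sends $\zeta^a\epsilon^n\mapsto(n\log\lambda,-n\log\lambda)$ where $\lambda=|\epsilon|>1$ in the first place; thus, up to the $|\mu_K|$ roots of unity, the units lie on a single line, and only $O(\log C)$ of them have all conjugates of modulus $\le C$. On the other hand Minkowski's theorem supplies a nonzero $\alpha\in\ord$ with all conjugates of size $O\!\left(|\discr K|^{1/4}\right)$, while a volume count shows the number of algebraic integers with all conjugates $\le C$ grows like $C^4/\sqrt{|\discr K|}$. The aim is to show that when $|\discr K|$ is large these small integers are too numerous, and too spread out in the two complex planes, to be reached by sums of distinct units whose summands of controlled size are so sparse on the unit line; this should yield an effective bound $|\discr K|\le D_0$. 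For CM fields one can argue more cheaply: the maximal real quadratic subfield $F$ controls $\epsilon$ through the Hasse unit index (so $\epsilon$ equals $\epsilon_F$ or $\sqrt{\epsilon_F}$ up to a root of unity), and a large $\discr F$ forces $\lambda$ large, making the units too sparse to represent densely. By Hermite--Minkowski, $|\discr K|\le D_0$ leaves only finitely many $K$.

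The hard part is precisely the counting in Stage two: one must control the cancellation that occurs when a sum of units individually large in one complex place nonetheless has small modulus there, so as to convert the heuristic ``sparse units cannot represent a dense set of small integers'' into a rigorous and \emph{effective} bound $D_0$ small enough for a search. Once $D_0$ is in hand, the proof concludes by listing all totally complex quartic fields with $|\discr K|\le D_0$ (from tables, organized by the torsion $\mu_K$ and by the CM/non-CM dichotomy), computing a fundamental unit in each, and testing the Stage-one congruence condition at a handful of small primes: every field that fails is not DUG and is removed, and the fields passing all tests are exactly those of Table~\ref{Tab:candidates}, which is the assertion of the theorem.
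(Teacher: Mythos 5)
First, a point of logistics: the paper does not prove this statement at all --- Theorem~\ref{Th:complete} is imported verbatim from Hajdu and Ziegler \cite{Hajdu:2013}, so there is no in-paper proof to compare against. Judged on its own terms, your proposal is a two-stage plan in which the decisive stage is left unproven. Stage one (reduction mod $p$: the additive span of the reductions of the units must be all of $\ord/p\ord$) is a correct necessary condition --- it is essentially the \'Sliwa--Belcher congruence test and is indeed the kind of device used in \cite{Hajdu:2013} to discard individual candidates --- but it is purely a tool for excluding fields one at a time and contributes nothing to finiteness. Stage two is where the theorem actually lives, and there you only state an aim: ``this should yield an effective bound $|\discr K|\le D_0$.'' The obstruction you yourself name --- cancellation --- is fatal to the counting heuristic as stated: a sum of distinct units is a sum over an arbitrary finite subset of $\{\zeta^a\epsilon^n\}$, and subsets containing units that are enormous at one complex place can still sum to something tiny there, so comparing the $O(\log C)$ units of house $\le C$ with the $\asymp C^4/\sqrt{|\discr K|}$ integers of house $\le C$ does not bound anything. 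No mechanism for controlling this cancellation is proposed, so $D_0$ is never produced, the enumeration never starts, and the final assertion that ``the survivors \ldots are precisely Table~\ref{Tab:candidates}'' is the conclusion assumed rather than derived.

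For comparison, the route actually taken in \cite{Hajdu:2013} does not bound the discriminant via Hermite--Minkowski. Instead it exploits the reformulation already described in Section~\ref{Sec:context} of this paper: $\omega(K)=1$ means every $\alpha\in\ord$ has an expansion $\sum_b c_b\epsilon^b$ with digits $c_b$ in the fixed finite set $\Sigma_\mu(1)$ of sums of distinct roots of unity. Passing to the second complex embedding, where $|\epsilon^{(2)}|=|\epsilon|^{-1}<1$, such an expansion confines $\alpha\epsilon^{-l}$ (for a suitable shift $l$) to a bounded region whose size depends only on $\mu$ and $|\epsilon|$; a lattice-point/pigeonhole argument in $\Lambda_\ord$ then yields an explicit upper bound on $|\epsilon|$, hence on the coefficients of the minimal polynomial of the fundamental unit, hence a finite effective list of fields --- which is then pruned by congruence and representation obstructions. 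If you want to salvage your outline, the missing idea is precisely this: derive the finiteness from a necessary covering-type condition on the digit set $\Sigma_\mu(1)$ relative to $\epsilon$ (the analogue of condition \eqref{Eq:Covering}), rather than from a density count that cancellation destroys.
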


Note that Hajdu and Ziegler \cite{Hajdu:2013} could not prove that all these fields listed above are DUG-fields. They only succeeded to do so for the fields
$K=\Q(\gamma)$, where
\[\gamma\in\left\{\zeta_5,\zeta_8,\zeta_{12},\sqrt{-1-\sqrt 2},\sqrt{-\frac{1+\sqrt 5}2},\zeta_3+\sqrt 5,\zeta_4+\sqrt 5\right\}\]
or $\gamma$ is a root of the polynomial $X^4+X^2-X+1$ using similar techniques as Belcher \cite{Belcher:1976}. Based on a large computer 
search Hajdu and
Ziegler \cite{Hajdu:2013} conjecture that all the fields in Theorem \ref{Th:complete} are indeed DUG. However, for all the remaining fields the authors
could not even provide a bound for $\omega(K)$.

The aim of this paper is to extend the method of Thuswaldner and Ziegler \cite{Thuswaldner:2011} to totally complex number fields and to apply this method to
extend the list of fields in Theorem \ref{Th:complete} where $\omega(K)=1$ is confirmed. Unfortunately we failed in proving that all fields listed in Theorem
\ref{Th:complete} are distinct unit
generated, but at least we can provide upper bounds for the unit sum height.

\begin{theorem}\label{Th:main}
If $K$ is a totally complex quartic field of the list in Table~\ref{Tab:candidates}
then $\omega(K)\leq 3$. Moreover all such fields are DUG except those marked with $\phantom{}^{\dag}$ or $\phantom{}^{\ddag}$. Those fields marked
with $\phantom{}^\dag$ satisfy at least $\omega(K)\leq 2$ and those marked with $\phantom{}^{\ddag}$ satisfy only $\omega(K)\leq 3$.
\end{theorem}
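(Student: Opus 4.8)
The plan is to reduce the statement to a finite, field-by-field computation, building for each $K$ in Table~\ref{Tab:candidates} an explicit digit system whose base is a unit and whose digits are short sums of distinct units. Concretely, I would first fix the fundamental unit $\epsilon$ of the rank-one unit group $\ord^*=\langle\zeta\rangle\times\langle\epsilon\rangle$ and embed $K$ into $\C^2$ through its two pairs of complex embeddings. Since $K$ is totally complex of degree $4$, the norm relation forces $\abs{\epsilon^{(1)}}>1>\abs{\epsilon^{(2)}}$ after relabelling, so multiplication by $\epsilon$ expands one complex place and contracts the other; this is precisely why $\epsilon$ is never a Pisot unit and the tool of \cite{Thuswaldner:2011} does not apply directly. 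The first task is therefore to replace the Pisot contraction by a two-place argument: given $\alpha\in\ord$, a reduction step subtracts a suitable digit $d$ and divides by $\epsilon$, controlling the contracting place directly while confining the expanding place to a bounded fundamental region.

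Second, I would isolate the abstract criterion behind the method. I would seek a finite digit set $D\subseteq\ord$ together with the base $\beta=\epsilon$ (or a small power, or $\zeta\epsilon$) so that every $\alpha\in\ord$ admits a finite representation $\alpha=\sum_{i=0}^{n}d_i\beta^i$ with $d_i\in D$. The decisive point, and the substitute for the Pisot hypothesis, is termination: without contraction in every archimedean place one cannot argue that the remainders shrink monotonically, so instead I would exhibit a bounded set $F\subseteq\C^2$ that is carried into itself by the reduction step and contains a full set of remainders, and then use that the image of $\ord$ in $\C^2$ is a discrete lattice to conclude that $F$ holds only finitely many lattice points, whence the algorithm eventually cycles. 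Showing that the only cycle is the trivial one at $0$ is the heart of the matter, and it is what I expect to verify by explicit, computer-assisted case analysis, since the finiteness property of such non-Pisot digit systems has no general criterion.

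Third, with a finite expansion $\alpha=\sum_i d_i\beta^i$ in hand and each digit written as a sum of distinct units $d_i=\sum_j u_{ij}$, I would expand $\alpha=\sum_{i,j}u_{ij}\beta^i$ into a sum of units, each $u_{ij}\beta^i\in\ord^*$. Within a fixed power $\beta^i$ the units $u_{ij}\beta^i$ are automatically distinct, because multiplication by the unit $\beta^i$ is injective; the only way to incur a coefficient larger than $1$ is a collision $u_{ij}\beta^i=u_{i'j'}\beta^{i'}$ across different powers. The unit sum height $\omega(\alpha)$ is thus bounded by the maximal number of index pairs mapping to one unit, and the content of the theorem is exactly the bookkeeping of these collisions: for the unmarked fields one arranges $D$ and $\beta$ so that no collision occurs, giving distinct units throughout and $\omega(K)=1$, i.e.\ a DUG-field; for the fields marked $\dag$ the unavoidable collisions pair up at worst, forcing coefficient $2$ and $\omega(K)\le 2$; and for those marked $\ddag$ a triple overlap remains, yielding only $\omega(K)\le 3$.

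Finally, the main obstacle I anticipate is twofold. The genuinely hard part is establishing finiteness of the expansions in the absence of a Pisot unit: termination of the reduction must be proved directly for each candidate, and it is here that the non-contracting place blocks any soft argument, forcing an explicit determination of the fundamental domain $F$ and a search over its finitely many lattice points to exclude nontrivial cycles. The secondary difficulty is the distinctness analysis, namely deciding for each field whether the collisions above can be eliminated (DUG), reduced to multiplicity $2$ (the $\dag$ case), or only to multiplicity $3$ (the $\ddag$ case). Since Table~\ref{Tab:candidates} is finite and explicit, both difficulties are addressed by grouping the fields according to the structure of their unit group and the small units available, and then carrying out the reduction group by group.
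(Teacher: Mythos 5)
Your overall architecture is the paper's: the fundamental unit of a totally complex quartic field has one expanding and one contracting complex place, i.e.\ it is a \emph{complex} Pisot number, and the paper's Theorem~\ref{Th:USHUppBound} is exactly the generalization you describe --- a covering condition $\epsilon P\subset\bigcup_{s\in\Sigma}(s+P)$ on a compact $P\subset\C$ handles the expanding place, a geometric series handles the contracting one, and discreteness of $\Lambda_\ord$ leaves a finite set of critical points to be checked by computer. But your third step contains a genuine conceptual error. The bounds $\omega(K)\le 2$ and $\omega(K)\le 3$ do \emph{not} arise from collisions $u_{ij}\beta^i=u_{i'j'}\beta^{i'}$ across different powers of the base: the units actually used are $\zeta_\mu^i\epsilon^j$ (or $\pm\tilde\epsilon^{2i},\pm\tilde\epsilon^{2i+1}$ in the five special cases), and these are automatically pairwise distinct for all $i,j$ because $\epsilon$ has infinite order modulo roots of unity. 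Your collision bookkeeping would come up empty and leave you with no mechanism to produce the numbers $2$ and $3$ at all. The real obstruction is that for the marked fields the covering condition fails for the alphabet $\Sigma_\mu(1)$ and one must take $\Sigma_\mu(w)=\{\sum_i d_i\zeta_\mu^i: 0\le d_i\le w\}$ with $w=2$ or $w=3$; each digit $c_j\epsilon^j$ is then a combination of distinct units with integer coefficients at most $w$, and it is this $w$ that transfers directly to $\omega(K)$. Relatedly, your requirement that the reduction algorithm have no nontrivial cycles is stronger than what the paper needs: the critical points need not reduce to $0$ under the algorithm, only admit expansions $\sum_{k=-1}^{-B}s_k\epsilon^k$ in \emph{negative} powers of $\epsilon$, which do not interfere with the non-negative powers already used.

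A second gap: even executed perfectly, your plan cannot prove the full statement for the unmarked field $K=\Q(\gamma)$ with $\gamma^4-\gamma+1=0$. The digit-system method only achieves $w=2$ there (Table~\ref{Tab:special}), hence only $\omega(K)\le 2$, while the theorem asserts $K$ is DUG. The paper closes this case by an entirely separate combinatorial argument (Section~\ref{Subsec:combinatorial}): every word over $\Z$ is rewritten into a $\{\overline{1},0,1\}$-word using shifts of the relation $100\overline{1}1$ coming from $\gamma^4-\gamma+1=0$, via a weight-decreasing normal form (Lemma~\ref{lem_getsparse}) followed by an induction eliminating the digits $\pm 2$. Some device of this kind, outside the digit-system framework, is unavoidable in your plan as well.
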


Connections to positional representation of numbers and recent results on this topic are discussed in the next section. In Section \ref{Sec:UpperBounds} 
we generalize a theorem due to Thuswaldner and Ziegler~\cite[Theorem 2.1]{Thuswaldner:2011} to the case which includes totally complex number fields. The real 
Pisot number is replaced by the notion of complex Pisot number, i.e.\ a non-real algebraic integer $\alpha$ with $|\alpha|>1$ such that the remaining conjugates 
other than $\overline{\alpha}$ lie in the open unit circle. 
With this generalization at hand we consider the case that a totally complex number field $K$ contains a primitive $\mu$-th root of unity
with $\mu>2$. This enables us to prove Theorem \ref{Th:main} up to the second item in the list of fields in Table~\ref{Tab:candidates}. In Section 
\ref{Sec:special} we apply a
variant of our method to the remaining fields and prove Theorem \ref{Th:main} up to the case that $K=\Q(\gamma)$, where $\gamma$ is a root of $X^4-X+1$. This
special case is solved in the last section of the paper by a combinatorial approach.

\section{Connection to positional representation of numbers}\label{Sec:context}

As pointed out already in~\cite{Thuswaldner:2011}, the problem of determining $\omega(K)$ is connected to non-standard positional representation of numbers. 
Consider a field $K$ of unit rank $1$, i.e. $K$ is either a totally real quadratic field or a cubic field with signature $(1,1)$ or a totally complex 
quartic field. By Dirichlet's theorem, all units in $K$ are of the form $\zeta_\mu^{i}\epsilon^{j}$, $i,j\in\Z$, where $\epsilon$ is the fundamental unit and 
$\zeta_\mu^i$ for $1\leq i\leq \mu$ form a finite set of all roots of unity in $K$.

The fact that $\omega(K)\leq w$ can be rephrased by saying that every element of $\ord$ can be represented as
$\sum_{j=l}^{k}a_j\epsilon^j$, where the `digits' $a_j$ take values in the finite set   
\[
\Sigma=\Sigma_\mu(w):=\left\{\sum_{i=1}^{\mu}d_i\zeta_\mu^i \: :\: 0\leq d_i \leq w \;\; \text{for} \;\; 1 \leq i \leq \mu\right\}.
\]

Assume that the fundamental unit $\epsilon$ also generates the integral basis of the ring of integers in $K$, i.e.\ $\ord=\Z[\epsilon]$.
Then the question reformulates to asking whether the set of numbers with finite expansion in base $\epsilon$ with digits in $\Sigma$
satisfies 
\begin{equation}\label{eq_finproperty}
{\rm Fin}_{\Sigma} (\epsilon):= \left\{\sum_{i=l}^ka_i\epsilon^i : k,l\in\Z,\ a_i\in{\Sigma}\right\}= {\mathbb Z}[\epsilon,\epsilon^{-1}]=\Z[\epsilon]\,,
\end{equation}
which will be true, if ${\rm Fin}_{\Sigma} (\epsilon)$ is closed under addition. Indeed, as ${\rm Fin}_{\Sigma} (\epsilon)$ contains $\epsilon^k$ for any $k\in\Z$, one obtains by addition the whole ring $\Z[\epsilon]$.
This is a generalisation of the so-called finiteness property studied in numeration systems, first introduced for R\'enyi $\beta$-expansions of real numbers by 
Frougny and Solomyak~\cite{Frougny-Solomyak:1992}. 

Another similar problem is the height reducing property (HRP) of numbers $\alpha$, where however, one requires that elements 
of $\Z[\alpha]$ rewrite with digits in a finite set which are non-zero only at non-negative powers of $\alpha$. Characterization of numbers satisfying such 
property was recently completed by Akiyama, Thuswaldner and Za\"\i mi~\cite{Akiyama-Zaimi:2013} and in~\cite{Akiyama-Thuswaldner-Zaimi:2014}, where the authors 
show that a complex $\alpha$ has HRP if and only if it is an algebraic integer whose conjugates over $\Q$ are either all of modulus one, or all of modulus 
greater than one. 

\section{Computing upper bounds for the unit sum height}\label{Sec:UpperBounds}

Before we state and prove our main tool (see Theorem \ref{Th:USHUppBound} below) to compute upper bounds for the unit sum height we have to introduce some
notation.

Let $K$ be a number field of degree $2s+t$, signature $(t,s)$ and let $\ord$ be an order of $K$. 
Also let us fix the real embeddings $\sigma_{1}$, \dots, $\sigma_{t}$ and the complex embeddings
$\sigma_{t+1}=\bar\sigma_{t+s+1},\ldots,\sigma_{t+s}=\bar\sigma_{t+2s}$ of $K$. For $\alpha\in K$ we denote by
$\alpha^{(i)}=\sigma_i\alpha$ the Galois conjugates of $\alpha$ and let us identify $\sigma_{t+1}(K)$ with $K$ and $\sigma_{t+1}(\ord)$ with $\ord$ 
respectively.

Let $\epsilon\in\ord$ be a complex Pisot number, i.e.\ such that $|\epsilon|>1$ and $|\epsilon^{(i)}|<1$ for all $i=1,\ldots,t+s$, $i\neq t+1$. Given a finite 
set $\Sigma\subset\ord$, denote
$$
C_i:=\max\{|c^{(i)}| \,:\, c\in\Sigma\}\,,\qquad  \text{ for } i=1,\dots,t+s,\ i\neq t+1.
$$
Consider a compact set $P\subset\C$, containing at least a neighborhood of $0$ and denote by $\mathcal{B}(\epsilon,\Sigma,P)$ the cylinder defined by
$$
\mathcal{B}(\epsilon,\Sigma,P):=\Big\{\alpha\in\ord \,:\, \alpha\in P\text{ and } |\alpha^{(i)}|\leq \frac{C_i}{1-|\epsilon^{(i)}|}
\text{ for }i=1,\dots,t+s,\, i\neq t+1 \Big\}\,.
$$
Note that since the lattice
\[\Lambda_\ord:=\{(\alpha^{(i)})_{1\leq i\leq t+s} \: :\: \alpha \in \ord\}\subset \R^t\times\C^s\]
is discrete, the set ${\mathcal B}(\epsilon,\Sigma,P)$ is finite.

Now we have all the notations to state the main result of this section:

\begin{theorem}\label{Th:USHUppBound}
Let $\epsilon\in\ord$ be a complex Pisot number. With the notation above, assume that
 \begin{equation}\label{Eq:Covering}
  \epsilon P \subset \bigcup_{s\in\Sigma}(s+P).
 \end{equation}
Then for each $\alpha\in\ord$ there exist $N,n\in \mathbb{N}$ such that
\begin{equation}\label{Eq:Rep1}
\alpha \epsilon^N=\beta+\sum_{i=0}^n c_{i} \epsilon^i,
\end{equation}
with $c_i\in\Sigma$ and $\beta$ is contained in the finite set $\mathcal{B}(\epsilon,\Sigma,P)$. The elements of $\mathcal{B}(\epsilon,\Sigma,P)\setminus\{0\}$
will be called critical points.
\end{theorem}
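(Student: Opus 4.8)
The plan is to construct the representation by combining two digit-peeling procedures: a \emph{top-down} step that forces the distinguished coordinate of $\alpha$ into $P$ while staying inside $\ord$, followed by a \emph{bottom-up} (greedy) step that pushes the remaining conjugates into the prescribed balls. Throughout I use the reformulation of \eqref{Eq:Covering}: since $0$ lies in the interior of $P$, for every $x\in P$ there is some $s\in\Sigma$ with $\epsilon x-s\in P$; scaling by $\epsilon^{m}$ gives $\epsilon^{m+1}P\subset\bigcup_{s\in\Sigma}(\epsilon^{m}s+\epsilon^{m}P)$, so every $w\in\epsilon^{m+1}P$ admits $c\in\Sigma$ with $w-\epsilon^{m}c\in\epsilon^{m}P$.

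For the top-down step, fix $\alpha\in\ord$ and write $z=\sigma_{t+1}(\alpha)$. Because $|\epsilon|>1$ and $P$ is a neighbourhood of $0$, there is $n\in\mathbb N$ with $z\in\epsilon^{n+1}P$. Applying the scaled covering repeatedly for $m=n,n-1,\dots,0$, I would subtract digits $c_n,\dots,c_0\in\Sigma$ and set $\beta_0=\alpha-\sum_{i=0}^{n}c_i\epsilon^i$; at each stage the only thing subtracted is an element of $\ord$, so $\beta_0\in\ord$, and by construction $\sigma_{t+1}(\beta_0)\in P$. Thus $\beta_0\in\ord\cap P$ and $\alpha=\beta_0+\sum_{i=0}^n c_i\epsilon^i$. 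Note that this step controls only the distinguished coordinate; the remaining conjugates of $\beta_0$ are as yet unbounded.

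For the bottom-up step I would iterate the map $T(x)=\epsilon x-s$ (with $s\in\Sigma$ chosen as above) starting from $x_0=\beta_0\in\ord\cap P$; it satisfies $T(\ord\cap P)\subset\ord\cap P$, and unwinding the recursion gives
\[\epsilon^{K}\beta_0=x_K+\sum_{i=0}^{K-1}c_i'\epsilon^i,\qquad c_i'\in\Sigma.\]
For the conjugates $i\neq t+1$ the ball $|\,\cdot\,|\le r_i:=C_i/(1-|\epsilon^{(i)}|)$ is invariant under $w\mapsto\epsilon^{(i)}w-s^{(i)}$ since $|\epsilon^{(i)}|r_i+C_i=r_i$, and off the ball the map contracts, so the whole forward orbit $\{x_k\}$ stays in a fixed bounded box; intersecting with the discrete lattice $\Lambda_\ord$ shows the orbit takes only finitely many values. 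The crux is to conclude that some $x_K$ actually lies in $\mathcal B(\epsilon,\Sigma,P)$, i.e.\ that the orbit \emph{enters} (not merely approaches) every closed ball: here I would pick a value $x_\ast$ recurring at times $k_1<k_2<\cdots\to\infty$ and use $|x_{k_m}^{(i)}|\le|\epsilon^{(i)}|^{k_m}|x_0^{(i)}|+r_i(1-|\epsilon^{(i)}|^{k_m})$ together with $x_{k_m}=x_\ast$ to let $k_m\to\infty$, forcing $|x_\ast^{(i)}|\le r_i$ for every $i$; hence $x_\ast\in\mathcal B(\epsilon,\Sigma,P)$.

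Finally I would combine the two expansions. Taking $K=k_1$ so that $x_K=x_\ast=:\beta\in\mathcal B(\epsilon,\Sigma,P)$ and multiplying the top-down identity by $\epsilon^K$,
\[\alpha\epsilon^{K}=\epsilon^{K}\beta_0+\sum_{i=0}^n c_i\epsilon^{i+K}=\beta+\sum_{i=0}^{K-1}c_i'\epsilon^i+\sum_{i=0}^{n}c_i\epsilon^{i+K}.\]
The two digit ranges $\{0,\dots,K-1\}$ and $\{K,\dots,K+n\}$ are disjoint, so the right-hand sum collapses to a single $\sum_{i=0}^{K+n}\tilde c_i\epsilon^i$ with $\tilde c_i\in\Sigma$, giving \eqref{Eq:Rep1} with $N=K$. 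The main obstacle is the bottom-up convergence into $\mathcal B(\epsilon,\Sigma,P)$: discreteness of $\Lambda_\ord$ is what upgrades the purely metric contraction (which only shows the orbit approaches the balls) into the exact inclusion $x_K\in\mathcal B(\epsilon,\Sigma,P)$; the top-down reduction and the final bookkeeping of digit ranges are routine.
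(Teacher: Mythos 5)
Your argument is correct, but it handles the conjugates $i\neq t+1$ by a genuinely different mechanism than the paper. The paper performs only your ``top-down'' peeling, applied however to $x=\alpha\epsilon^N$ rather than to $\alpha$: the exponent $N$ is chosen in advance so that $|\alpha^{(i)}(\epsilon^{(i)})^N|<\delta$ for all $i\neq t+1$, whence the peeled remainder $\beta=\alpha\epsilon^N-\sum c_j\epsilon^j$ automatically satisfies $|\beta^{(i)}|<\delta+C_i/(1-|\epsilon^{(i)}|)$, and discreteness of $\Lambda_\ord$ together with compactness of $P$ allows $\delta$ to be taken so small that these strict bounds collapse to membership in $\mathcal{B}(\epsilon,\Sigma,P)$. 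You instead peel $\alpha$ itself (landing in $\ord\cap P$ with uncontrolled conjugates) and then run the forward dynamics $x\mapsto\epsilon x-s$, using pigeonhole on the finite orbit to extract a recurrent point $x_\ast$ and letting the recurrence times tend to infinity to force the exact inequalities $|x_\ast^{(i)}|\leq C_i/(1-|\epsilon^{(i)}|)$. Both proofs rest on the same covering/induction step and on discreteness of $\Lambda_\ord$; the paper's route is shorter because a single peeling suffices and no second expansion or digit-range bookkeeping is needed, while yours trades the slightly delicate ``choose $\delta_0$ uniformly over a finite set'' step for a cleaner limit argument and makes explicit that $\mathcal{B}(\epsilon,\Sigma,P)$ is an absorbing set for the greedy dynamics. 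Your final assembly (disjoint exponent ranges, $N=K$) is sound.
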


\begin{proof}
 Let $x \in \C$ and assume that \eqref{Eq:Covering} holds and let $n\geq -1$ be an integer such that $x\in \epsilon^{n+1}P$. Such an integer $n$ exists since 
$P$ contains a neighborhood of $0$. Let us prove by induction on $n$ that there exist $c_0,\dots,c_{n}\in\Sigma$ such that
\begin{equation}\label{eq:Reduce} x-\sum_{j=0}^{n}c_j \epsilon^j \in P.\end{equation}

The case $n=-1$ is trivial.
Now let us assume that \eqref{eq:Reduce} is proved for all integers $M \leq n$ and assume that $x\in \epsilon^{n+1}P$. Since by assumption
\[
x\in \epsilon^{n+1} P \subset \bigcup_{s\in\Sigma}\left(s\epsilon^{n}+\epsilon^{n}P\right)\,,
\]
there exists some $s=c_{n}\in\Sigma$ with $x-s\epsilon^{n} \in \epsilon^{n}P$. Since we assume by induction that \eqref{eq:Reduce} is true for all $M\leq n$ we 
know that there exist
$c_0,\dots,c_{n-1}\in\Sigma$ such that
\[
x-s\epsilon^{n}-\sum_{j=0}^{n-1} c_j \epsilon^j=x- \sum_{j=0}^{n} c_j \epsilon^j\in P
\]
and we have established \eqref{eq:Reduce}.

With \eqref{eq:Reduce} at hand, we are ready to prove Theorem \ref{Th:USHUppBound}.
Let $\alpha \in \ord$ be arbitrary. Since $|\epsilon|>1$ and for all other conjugates we have $|\epsilon^{(i)}|<1$, there exists for each $\delta>0$ a
non-negative integer $N$ such that $\left|\alpha^{(i)}(\epsilon^{(i)})^N\right|<\delta$ for $i=1,\ldots,t+s$, $i\neq t+1$. In other words, apart form
$\alpha\epsilon^N$ and $\overline{\alpha}\overline{\epsilon}^N$, all conjugates of $\alpha\epsilon^N$ are small. In view of \eqref{eq:Reduce} we can 
approximate $\alpha\epsilon^N$ by a
combination of powers of $\epsilon$ with coefficients in $\Sigma$. In particular, we apply \eqref{eq:Reduce} to $x=\alpha\epsilon^N$. This yields $n\in\N$ 
and
$c_j\in\Sigma$ for $0\leq j \leq n$
such that
\begin{equation}\label{firstbound}
\beta:=\alpha\epsilon^N-\sum_{j=0}^{n}c_j \epsilon^j \in P.
\end{equation}
Then, taking conjugates, we get
\begin{equation}\label{secondbound}
|\beta^{(i)}|= \left|\alpha^{(i)}(\epsilon^{(i)})^N-\sum_{j=0}^{n}c_j^{(i)} (\epsilon^{(i)})^j\right| \leq  \delta + C_i \sum_{j=0}^{n} |(\epsilon^{(i)})^j|
<\delta+\frac{C_i}{1-|\epsilon^{(i)}|}
\end{equation}
for $1\leq i \leq t+s$, $i\neq t+1$. Since $\Lambda_\ord$ is a discrete set and since we assume that $P$ is compact there exists a $\delta_0$ such that for 
every
$0<\delta<\delta_0$ the conditions \eqref{firstbound}
and \eqref{secondbound} imply $\beta \in {\mathcal B}(\epsilon,\Sigma,P)$.
\end{proof}

\section{Application to fields with a fourth or sixth root of unity}\label{Sec:Appl}

Assume that $K$ contains a $\mu$-th root of unity with $\mu>2$ and denote by $\zeta_\mu$ some
primitive $\mu$-th root of unity. We may assume that $\mu$ is even. Indeed if $\mu$ is odd then with $\zeta_\mu$ also $-\zeta_\mu=\zeta_{2\mu}$ is an
element of $K$. As explained in Section~\ref{Sec:context}, the role of the digit set $\Sigma$ will be taken by the set of all possible sums of roots of unity 
with bounded coefficients. Therefore we
write
\[
\Sigma=\Sigma_\mu(w):=\left\{\sum_{i=1}^{\mu}d_i\zeta_\mu^i \: :\: 0\leq d_i \leq w \;\; \text{for} \;\; 1 \leq i \leq \mu\right\}.
\]

First, let us assume that $K$ is a complex (not necessarily quartic) field that contains a fourth root of unity. Given a complex Pisot number $\epsilon\in 
\ord$, we apply Theorem~\ref{Th:USHUppBound}
to the case where $P\subset \C$ is the square with vertices $\frac{\pm 1\pm \imath}2$ and obtain a simple criterion such that the covering
property~\eqref{Eq:Covering} holds:

\begin{lemma}\label{lem:criterion_mu_4}
 Let $P\subset \C$ be the square with vertices $\frac{\pm 1\pm \imath}{2}$. Let $\eta=\epsilon\frac{1+\imath}2$, then~\eqref{Eq:Covering} is satisfied, provided
 \[\max\{|\Re(\eta)|,|\Im(\eta)|\}\leq \frac{1+2w}2.\]
\end{lemma}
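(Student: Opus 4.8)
The plan is to make the geometry completely explicit. Since $K$ contains a fourth root of unity we work with $\mu=4$ and $\zeta_4=\imath$, so that $\zeta_4^1,\zeta_4^2,\zeta_4^3,\zeta_4^4=\imath,-1,-\imath,1$. First I would pin down the digit set: for $s=\sum_{i=1}^4 d_i\zeta_4^i$ with $0\le d_i\le w$ one gets $s=(d_4-d_2)+(d_1-d_3)\imath$, and as the $d_i$ range independently the quantities $d_4-d_2$ and $d_1-d_3$ sweep out all integers in $[-w,w]$. Hence $\Sigma=\Sigma_4(w)$ is exactly the set of Gaussian integers $a+b\imath$ with $-w\le a,b\le w$. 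Because $P$ is the closed unit square centred at $0$ with sides parallel to the axes, each translate $s+P$ is a unit square centred at the lattice point $s$, and these fit together with no gaps to give
\[
\bigcup_{s\in\Sigma}(s+P)=\left[-w-\tfrac12,\,w+\tfrac12\right]\times\left[-w-\tfrac12,\,w+\tfrac12\right],
\]
the axis-aligned square of half-side $w+\frac12=\frac{1+2w}2$. Thus verifying the covering condition \eqref{Eq:Covering} reduces to checking that $\epsilon P$ sits inside this large axis-aligned square.

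Next I would describe $\epsilon P$ itself. Multiplication by $\epsilon$ is a similarity (a rotation combined with a scaling), so $\epsilon P$ is again a square centred at $0$, and I only need to locate its four vertices, namely the images of the vertices $\frac{\pm1\pm\imath}2$ of $P$. Using $\eta=\epsilon\frac{1+\imath}2$ together with the identities $\frac{-1+\imath}2=\imath\cdot\frac{1+\imath}2$ and $\frac{1-\imath}2=-\imath\cdot\frac{1+\imath}2$, the four vertices of $\epsilon P$ come out as $\eta,\ \imath\eta,\ -\eta,\ -\imath\eta$. Writing $\eta=\Re(\eta)+\imath\,\Im(\eta)$, a one-line computation then shows that the real parts of these four points form the set $\{\pm\Re(\eta),\pm\Im(\eta)\}$, and likewise for the imaginary parts, so that every vertex of $\epsilon P$ has both coordinates bounded in absolute value by $\max\{|\Re(\eta)|,|\Im(\eta)|\}$.

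Finally I would invoke convexity: both $\epsilon P$ and the target square $[-w-\frac12,w+\frac12]^2$ are convex, so $\epsilon P$ is contained in the target precisely when all four of its vertices are. By the previous step this happens as soon as $\max\{|\Re(\eta)|,|\Im(\eta)|\}\le w+\frac12=\frac{1+2w}2$, which is exactly the hypothesis, and \eqref{Eq:Covering} follows. I do not anticipate a genuine obstacle here; the only points needing care are confirming that the lattice translates of $P$ tile the \emph{solid} axis-aligned square (so that the union is the full square and not merely a boundary grid) and correctly tracking the rotation when expressing the vertices of $\epsilon P$ in terms of $\eta$. Everything else is elementary planar convex geometry.
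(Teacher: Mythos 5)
Your proof is correct and follows essentially the same route as the paper: identify $\bigcup_{s\in\Sigma_4(w)}(s+P)$ with the square $(1+2w)P$, then use convexity to reduce the covering condition to checking the four vertices of $\epsilon P$, which are $\pm\eta,\pm\imath\eta$. You simply make explicit the two steps the paper leaves implicit (that $\Sigma_4(w)$ is the full set of Gaussian integers with coordinates in $[-w,w]$, and that the translates tile the solid square), which is fine but not a different argument.
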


\begin{proof}
 Note that
 \[\bigcup_{s\in\Sigma_\mu(w)}s+P=(1+2w) P.\]
 Since $P$ is convex, it suffices to prove that all the vertices of $\epsilon P$ lie within the square $(1+2w)P$, i.e.
 \[\max\{|\Re(\eta)|,|\Im(\eta)|\}\leq \frac{1+2w}2.\]
\end{proof}

In view of Theorem \ref{Th:complete} we want to apply Lemma \ref{lem:criterion_mu_4} together with Theorem \ref{Th:USHUppBound} to the fields
$\Q(\sqrt{1+\zeta_4})$, $\Q(\sqrt{1+2\zeta_4})$, $\Q(\sqrt{1+4\zeta_4})$ and $\Q(\sqrt{7+4\zeta_4})$. For the complex Pisot number $\epsilon$ we take the 
fundamental unit of $K$. Since the computations in all cases are similar, we
only give details
for the case $K=\Q(\sqrt{1+\zeta_4})$. For some details in the other cases see Table \ref{Tab:fourth} below.

Let us discuss the case that $K=\Q(\sqrt{1+\zeta_4})$ and $\ord$ is the maximal order of $K$. We write $\gamma=\sqrt{1+\zeta_4}$ and choose a branch of the
logarithm
such that $\Re(\gamma)>0$. The fundamental unit is $\epsilon=1+\gamma$ and
\[\epsilon\frac{1+\imath}2=\eta\simeq 0.822+ 1.277 \imath.\]
Due to Lemma \ref{lem:criterion_mu_4} we may apply Theorem \ref{Th:USHUppBound} with $w=1$ and we
obtain the critical points
\[\mathcal B(\epsilon,\Sigma,P)=\left\{\alpha \in \ord\: :\: |\Re(\alpha)|,|\Im(\alpha)|\leq \frac 12,\; \left|\alpha^{(2)}\right|\leq 2.647\right\}\]
which are exactly $\zeta_4^k(1-\gamma)$, with $k=0,1,2,3$. But, these critical points are $\zeta_4^k \epsilon^{-1}$ with $k=0,1,2,3$ written in terms of
$\epsilon^{-1}$.
Since all the critical points can be written as the sum of distinct units such that the exponent of $\epsilon$ is negative we deduce from
Theorem~\ref{Th:USHUppBound} that each algebraic integer of $K$ is the sum of distinct units, i.e. $K$ is DUG.

In the other cases the critical points are less obvious. But by a computer search we were able to confirm that all critical points can be written in the form
$\sum_{k=-1}^{-B}s_i\epsilon^i$ with $s_i\in\Sigma_4(w)$. Further, let us denote by $C$ the number of critical points.

\begin{table}[ht]
\caption{Details to the computations in case that $K$ contains fourth roots of unity.}\label{Tab:fourth}
\begin{tabular}{|c|c|c|c||c|c|c|c|}
\hline $K$ & $w$ & $C$ & $B$ & $K$ & $w$ & $C$ & $B$\\\hline\hline
$\Q(\sqrt{1+\zeta_4})$ & $1$ & $4$ & $1$ & $\Q(\sqrt{1+4\zeta_4})$ & $1$ & $16$ & $2$ \\\hline
$\Q(\sqrt{1+2\zeta_4})$ & $1$ & $8$ & $2$ & $\Q(\sqrt{7+4\zeta_4})$ & $2$ & $8$ & $2$\\\hline
\end{tabular}
\end{table}

\begin{remark}
It is rather plausible that another choice of $P$ might yield $w=1$ in the case that $K=\Q(\sqrt{7+4\zeta_4})$. The best choice for $P$ seems to be the
unique compact set $P$ which satisfies
\[P= \bigcup_{s\in\Sigma}f_s(P),\]
where $f_s(x)=s+\frac{x}{\epsilon}$ for all $s\in\Sigma_4(1)$ (e.g. see \cite[Theorem 9.1]{Falconer:FG}). Obviously this set $P$ is compact and satisfies
condition \eqref{Eq:Covering} of Theorem \ref{Th:USHUppBound}. Unfortunately this iterated function system does not fulfill the so called ``open set condition''
(see e.g. \cite[page 118]{Falconer:FG}) and therefore we are unable to show that $P$ contains a neighborhood of $0$, which is essential in the proof of
Theorem~\ref{Th:USHUppBound}.
\end{remark}

Now let us assume that $K$ is a complex (not necessarily quartic) field that contains sixth roots of unity. In this case we choose $P$ to be a regular
hexagon. Again, $\epsilon$ is the fundamental unit.

\begin{lemma}\label{lem:criterion_mu_6}
 Let $P\subset \C$ be the hexagon with vertices $v_k=\frac1{\sqrt3}{\exp\left(\frac{2\pi \imath (2k+1)}{12}\right)}$ and $k=0,\ldots,5$. Let $\eta_k=\epsilon 
v_k$
then \eqref{Eq:Covering} is satisfied, provided
 \begin{equation}\label{Cond:mu_6}
 \max_{0\leq k \leq 5}\{|\Im(\eta_k)|\}\leq \frac{5w+2}{2\sqrt{3}}.
 \end{equation}
\end{lemma}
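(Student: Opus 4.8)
The plan is to mimic the proof of Lemma~\ref{lem:criterion_mu_4}, exploiting the convexity of the hexagon $P$. First I would establish the analogue of the union identity used in the quartic case, namely that
\[
\bigcup_{s\in\Sigma_\mu(w)}(s+P)
\]
is itself a (larger, suitably scaled and convex) hexagon. The key observation is that for $\mu=6$ the digit set $\Sigma_6(w)$ consists of all sums $\sum_{i=1}^6 d_i\zeta_6^i$ with $0\le d_i\le w$; since $\zeta_6^{i+3}=-\zeta_6^i$, these lattice points fill out a hexagonal arrangement whose convex hull is a regular hexagon centered at $0$. Tiling $P$ by its translates along this hexagonal lattice should exactly cover the dilate $\lambda P$ for the appropriate scaling factor $\lambda$, just as the squares tiled $(1+2w)P$ before. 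I expect $\lambda=\frac{5w+2}{\sqrt 3}\cdot(\text{normalization})$ to emerge from counting how far the hexagonal translates reach in the direction perpendicular to a pair of opposite edges.

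Once the covered region is identified as a scaled hexagon $\lambda P$, the containment $\epsilon P\subset \lambda P$ reduces, by convexity of both sets, to checking that every vertex $\eta_k=\epsilon v_k$ of $\epsilon P$ lies inside $\lambda P$. Because $P$ is a regular hexagon with two horizontal edges (the vertices $v_k$ are placed at angles $\frac{2\pi(2k+1)}{12}$, i.e.\ at $30^\circ,90^\circ,\dots$), the binding constraints of $\lambda P$ are the two horizontal edges: membership in $\lambda P$ is governed by the imaginary part. Concretely, the half-width of $\lambda P$ in the imaginary direction is the relevant bound, and a point lies in $\lambda P$ precisely when its distance to the nearest supporting line is controlled; for the horizontal edges this is exactly a bound on $|\Im(\cdot)|$. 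This is what yields condition~\eqref{Cond:mu_6}: requiring $\max_k|\Im(\eta_k)|\le \frac{5w+2}{2\sqrt 3}$ forces all vertices $\eta_k$ below the top edge and above the bottom edge of $\lambda P$.

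The one subtlety I would have to address is that, unlike the square, a hexagon has three distinct pairs of parallel edges, so in principle one should verify the vertices $\eta_k$ against all three families of supporting lines, not just the horizontal pair. The reason only the imaginary part appears is presumably that the hexagonal symmetry of $\Sigma_6(w)$ makes the covered region $\lambda P$ a hexagon \emph{similar} to $P$ (same orientation), so that checking one pair of opposite edges together with the $60^\circ$ rotational symmetry of the configuration handles the other two pairs automatically. I would make this symmetry argument explicit: since $\zeta_6 P=P$ (multiplication by a sixth root of unity permutes the vertices $v_k$ and hence fixes the hexagon), and $\zeta_6\Sigma_6(w)=\Sigma_6(w)$, the whole covering picture is invariant under rotation by $60^\circ$, so a single edge constraint propagates to all six edges.

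The main obstacle I anticipate is pinning down the exact scaling constant $\frac{5w+2}{2\sqrt 3}$ and confirming that the translated hexagons genuinely tile (cover without gaps) the dilate $\lambda P$ rather than merely being contained in it; a gap would break the covering~\eqref{Eq:Covering} even if all vertices satisfy the inequality. Verifying this tiling amounts to a finite but slightly fiddly computation with the hexagonal lattice generated by the differences $\zeta_6^i-\zeta_6^j$, identifying which coefficient vectors $(d_i)$ give lattice translates that fit edge-to-edge. Once the tiling is confirmed and the scaling constant verified, the vertex check via convexity and the $60^\circ$ symmetry closes the argument exactly as in Lemma~\ref{lem:criterion_mu_4}.
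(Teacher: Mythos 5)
Your overall skeleton matches the paper's one\-line proof: exhibit a large convex hexagon inside $\bigcup_{s\in\Sigma_6(w)}(s+P)$, reduce the containment $\epsilon P\subset\bigcup_{s}(s+P)$ to a vertex check by convexity, and use the $60^\circ$ symmetry ($\zeta_6 P=P$, $\zeta_6\Sigma_6(w)=\Sigma_6(w)$, and $\eta_{k+1}=\zeta_6\eta_k$) to explain why a single linear functional applied to all six vertices handles all three pairs of edges. That symmetry observation is correct and is exactly the unstated step behind the paper's reduction to $\max_k|\Im(\eta_k)|$.

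However, there is a genuine error in your identification of the covered region, and following your plan literally would not produce condition \eqref{Cond:mu_6}. First, $P$ does \emph{not} have horizontal edges: its vertices sit at angles $30^\circ,90^\circ,\dots$, so it has a vertex at the top and \emph{vertical} left and right edges (it is the Voronoi cell of the Eisenstein lattice). Consequently a dilate $\lambda P$ in the same orientation has no horizontal edges, and its supporting constraints are bounds on $\Re(z)$, $\Re(\zeta_6 z)$, $\Re(\zeta_6^2 z)$ --- not on $\Im(z)$. Second, the union $\bigcup_{s\in\Sigma_6(w)}(s+P)$ is not a hexagon at all (its boundary is the bumpy staircase visible in Figure~\ref{fig:hexagons}), and it does not contain a same-orientation dilate of $P$ of the required size: already for $w=1$ its topmost point is at height $\sqrt3+\tfrac1{\sqrt3}\approx 2.309$, below the top vertex $\tfrac73\approx 2.333$ of the same-orientation hexagon with inradius $\tfrac{7}{2\sqrt3}$. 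What the paper actually inscribes is the hexagon $\exp(\imath\pi/6)\frac{5w+2}{\sqrt3}P$, rotated by $30^\circ$ relative to $P$; this rotated hexagon has horizontal top and bottom edges of inradius $\frac{5w+2}{2\sqrt3}$, which is precisely where the bound on $|\Im(\eta_k)|$ in \eqref{Cond:mu_6} comes from. So the $30^\circ$ rotation is not a cosmetic detail you can recover by symmetry afterwards --- it is the reason the criterion reads $|\Im(\eta_k)|\le\frac{5w+2}{2\sqrt3}$ rather than a bound on real parts with a different constant. Finally, your insistence on an exact tiling is both unnecessary (the proof only needs the one containment $\supset$, which the paper verifies directly) and unattainable in the form you state, since the union is strictly larger than any inscribed hexagon. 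To repair the argument you would replace ``the union equals $\lambda P$'' by ``the union contains $\exp(\imath\pi/6)\frac{5w+2}{\sqrt3}P$'' and verify this by locating the lowest notches of the staircase boundary, which occur at distance $\frac{5w+2}{2\sqrt3}$ from the origin in the directions $\exp\bigl(\imath\pi(2j+1)/6\bigr)$.
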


\begin{proof}
 Note that
 \[\bigcup_{s\in\Sigma_\mu(w)}s+P\supset \exp (\imath \pi/6)\frac{5w+2}{\sqrt{3}} P.\]
 Therefore it suffices to prove that all the vertices of $\epsilon P$ lie within the hexagon $\frac{5w+2}{2}\exp(\pi \imath/6) P$, i.e.
 \[\max_k\{|\Im(\eta_k)|\}\leq \frac{5w+2}{2\sqrt{3}}.\]
 \begin{figure}
  \centering
  \includegraphics[width=\linewidth]{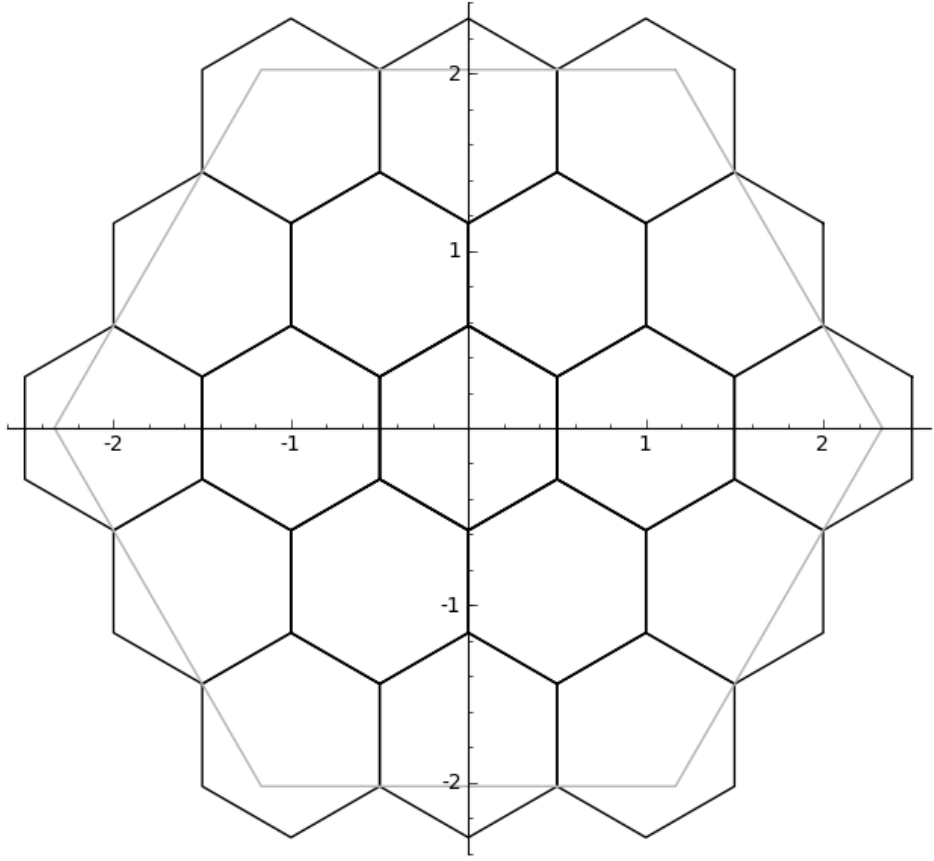}
  \caption{The case that $w=1$ in the proof of Lemma \ref{lem:criterion_mu_6}}
  \label{fig:hexagons}
\end{figure}

For a better illustration see Figure \ref{fig:hexagons}, where the case that $w=1$ is shown. The black hexagons are translations of $P$ by all possible
$s\in\Sigma_6(1)$. The gray hexagon is the hexagon $\exp (\imath \pi/6)\frac{5w+2}{\sqrt{3}}P=\exp (\imath \pi/6)\frac{7}{\sqrt{3}}P$.
\end{proof}

We proceed as described in the case that $K$ contains a fourth root of unity. Since the computations are similar to those made in the case that $\mu=4$ we
only give a few details (see Table \ref{Tab:sixth} below).

\begin{table}[ht]
\caption{Details to the computations in case that $K$ contains sixth roots of unity}\label{Tab:sixth}
\begin{tabular}{|c|c|c|c||c|c|c|c|}
\hline $K$ & $w$ & $C$ & $B$ & $K$ & $w$ & $C$ & $B$\\\hline\hline
$\Q(\sqrt{2+\zeta_3})$ & $1$ & $6$ & $1$ & $\Q(\sqrt{12+5\zeta_3})$ & $1$ & $6$ & $1$  \\\hline
$\Q(\sqrt{4+\zeta_3})$ & $1$ & $66$ & $3$ & $\Q(\sqrt{11+7\zeta_3})$ & $1$ & $0$ &  \\\hline
$\Q(\sqrt{8+\zeta_3})$ & $1$ & $6$ & $1$ & $\Q(\sqrt{9+8\zeta_3})$ & $1$ & $6$ & $1$ \\\hline
$\Q(\sqrt{3+2\zeta_3})$ & $1$ & $0$ &  & $\Q(\sqrt{15+11\zeta_3})$ & $1$ & $0$ &  \\\hline
$\Q(\sqrt{4+3\zeta_3})$ & $1$ & $0$ &  & $\Q(\sqrt{19+11\zeta_3})$ & $2$ & $6$ & $1$\\\hline
$\Q(\sqrt{7+3\zeta_3})$ & $1$ & $6$ & $1$ & $\Q(\sqrt{17+12\zeta_3})$ & $2$ & $6$ & $1$\\\hline
$\Q(\sqrt{11+3\zeta_3})$ & $1$ & $0$ &  & $\Q(\sqrt{17+16\zeta_3})$ & $2$ & $6$ & $1$ \\\hline
$\Q(\sqrt{5+4\zeta_3})$ & $1$ & $24$ & $2$ & $\Q(\zeta_3,\sqrt{6})$ & $1$ & $0$ &  \\\hline
$\Q(\sqrt{9+4\zeta_3})$ & $1$ & $6$ & $1$ & $\Q(\zeta_3,\sqrt{21})$ & $1$ & $6$ & $1$ \\\hline
$\Q(\sqrt{13+4\zeta_3})$ & $1$ & $0$ & & & & & \\\hline
\end{tabular}
\end{table}

\begin{remark}
Figure \ref{fig:hexagons} shows that Lemma \ref{lem:criterion_mu_6} is not optimal and it may happen that $\epsilon P\subset \bigcup_{s\in\Sigma_6(w)}s+P$
holds but condition \eqref{Cond:mu_6} in Lemma \ref{lem:criterion_mu_6} fails. Although this seems to be rather unlikely, we checked in case that $K$ is one of
the fields $\Q(\sqrt{19+11\zeta_3})$, $\Q(\sqrt{17+12\zeta_3})$ or $\Q(\sqrt{17+16\zeta_3})$ whether $\epsilon P\subset \bigcup_{s\in\Sigma_6(1)}s+P$ holds
although Lemma \ref{lem:criterion_mu_6} fails with $w=1$. But, in all three cases condition \eqref{Eq:Covering} fails for $w=1$.
\end{remark}

\begin{remark}
 In case that $\mu>6$ it is not hard to get criteria which are similar to the criteria in Lemmas \ref{lem:criterion_mu_4} and \ref{lem:criterion_mu_6} such
that the covering property
\eqref{Eq:Covering} holds. As we can already see in the case that $\mu=6$ such results are either not best possible or not very simple. So in view of Theorem
\ref{Th:main} we abandon to discuss criteria for $\mu>6$.
\end{remark}

\begin{remark}
We want to note that in case of $\mu=8$ and $K=\Q(\zeta_8)$ Theorem \ref{Th:main} yields a new proof that $\Q(\zeta_8)$ is indeed DUG. Indeed
choose $P$ to be the square with vertices $\frac{\pm 1\pm \imath}2$. Then it is easy to show that
\[\epsilon P=(1+\sqrt{2})P\subset \bigcup_{s\in\Sigma_8(1)}(s+P).\]
Since the critical points in this case are $\frac{\zeta_8^{2k+1}}{\epsilon}$ for $k=0,1,2,3$ we see that $K$ is indeed DUG.
\end{remark}

\section{Five special cases}\label{Sec:special}

Now we consider the remaining five number fields in Theorem~\ref{Th:main}, namely those which do not contain any roots of unity $\zeta_\mu$ for $\mu>2$.
The same approach as in
the previous section will not lead to success, since the alphabet $\Sigma_\mu(w)=\{-w,\dots,0,\dots,w\}$ is contained in the real line.
Instead, we take the digit set
$\Sigma=\left\{d_0+d_1\tilde{\epsilon} \: :\: -w\leq d_0,d_1 \leq w \right\}$, and expand the number $\alpha\in\ord$ in base $\epsilon = \tilde{\epsilon}^2$,
where $\tilde{\epsilon}$ is a fundamental unit with $|\tilde{\epsilon}|>1$. The compact set $P\subset \C$ is taken to be the parallelogram with vertices
$\frac{\pm1\pm{\tilde{\epsilon}}}{2}$.

\begin{lemma}\label{lem:criterion_mu_2}
 Let $P\subset \C$ be the parallelogram with vertices $\frac{\pm1\pm{\tilde{\epsilon}}}{2}$. Let
 $$
 A=\left(\begin{smallmatrix}1&\Re(\tilde{\epsilon})
 \\0&\Im(\tilde{\epsilon})\end{smallmatrix}\right),
 \qquad
 \begin{array}{ll}
 a_1=\Re\big(\frac{\tilde{\epsilon}^2}2(1+\tilde{\epsilon})\big), &b_1=\Im\big(\frac{\tilde{\epsilon}^2}2(1+\tilde{\epsilon})\big),\\[2mm]
 a_2=\Re\big(\frac{\tilde{\epsilon}^2}2(1-\tilde{\epsilon})\big), &b_2=\Im\big(\frac{\tilde{\epsilon}^2}2(1-\tilde{\epsilon})\big).
 \end{array}
 $$
 Then \eqref{Eq:Covering} is satisfied, provided
 $$
 \max_{k=1,2}\left\{\left|(1,0)A^{-1}\left(\begin{smallmatrix}a_k\\b_k\end{smallmatrix}\right)\right|,
 \left|(0,1)A^{-1}\left(\begin{smallmatrix}a_k\\b_k\end{smallmatrix}\right)\right|\right\}\leq \frac{1+2w}2\,.
 $$
\end{lemma}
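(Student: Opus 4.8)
The plan is to mimic the proofs of Lemmas~\ref{lem:criterion_mu_4} and~\ref{lem:criterion_mu_6}: first replace the union $\bigcup_{s\in\Sigma}(s+P)$ by a single dilated copy of $P$, and then, using convexity, reduce the covering property~\eqref{Eq:Covering} to checking finitely many vertices of $\epsilon P$.

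First I would pass to real coordinates adapted to $P$. Since $\tilde{\epsilon}$ is non-real---which is exactly what makes $\epsilon=\tilde{\epsilon}^2$ a complex Pisot number---the pair $\{1,\tilde{\epsilon}\}$ is an $\R$-basis of $\C$; writing $z=u\cdot1+v\cdot\tilde{\epsilon}$ and separating real and imaginary parts shows that $z=a+b\imath$ corresponds to $\left(\begin{smallmatrix}a\\b\end{smallmatrix}\right)=A\left(\begin{smallmatrix}u\\v\end{smallmatrix}\right)$, with $A$ the matrix in the statement. In these $(u,v)$-coordinates the parallelogram $P$ becomes the square $\left[-\tfrac12,\tfrac12\right]^2$, and each digit $s=d_0+d_1\tilde{\epsilon}\in\Sigma$ becomes the integer point $(d_0,d_1)$ with $-w\le d_0,d_1\le w$. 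Note that $A$ is invertible precisely because $\Im(\tilde{\epsilon})\neq0$.

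Next I would compute the union. In the $(u,v)$-coordinates $s+P$ is the unit square centred at $(d_0,d_1)$, and as $d_0,d_1$ range over all integers in $[-w,w]$ these squares tile the box $\left[-\tfrac{2w+1}{2},\tfrac{2w+1}{2}\right]^2$ exactly. Transforming back this gives
\[
\bigcup_{s\in\Sigma}(s+P)=(1+2w)P,
\]
the analogue of the opening identity in the proof of Lemma~\ref{lem:criterion_mu_4}. This tiling identity is the one genuinely substantive step; everything after it is bookkeeping in linear algebra. It is specific to the parallelogram $P$, which tiles the plane under the lattice $\Z+\Z\tilde{\epsilon}$, and would fail for a shape like the hexagon of Lemma~\ref{lem:criterion_mu_6}, where only the inclusion $\supset$ survives.

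Finally I would reduce the covering to a vertex check. With the identity above, \eqref{Eq:Covering} reads $\epsilon P=\tilde{\epsilon}^2P\subset(1+2w)P$. Both sides are convex and $(1+2w)P$ is centrally symmetric, so it suffices to verify that the four vertices $\tilde{\epsilon}^2\tfrac{\pm1\pm\tilde{\epsilon}}{2}$ of $\epsilon P$ lie in $(1+2w)P$, and by the symmetry only the two vertices $v_1=\tfrac{\tilde{\epsilon}^2}{2}(1+\tilde{\epsilon})$ and $v_2=\tfrac{\tilde{\epsilon}^2}{2}(1-\tilde{\epsilon})$ matter. Writing $v_k=a_k+b_k\imath$, the point $v_k$ lies in $(1+2w)P$ exactly when its coordinates $A^{-1}\left(\begin{smallmatrix}a_k\\b_k\end{smallmatrix}\right)$ are both bounded in absolute value by $\tfrac{1+2w}{2}$. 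Reading off the two coordinates as $(1,0)A^{-1}\left(\begin{smallmatrix}a_k\\b_k\end{smallmatrix}\right)$ and $(0,1)A^{-1}\left(\begin{smallmatrix}a_k\\b_k\end{smallmatrix}\right)$ and maximising over $k=1,2$ yields precisely the claimed inequality. The only point needing a little care beyond routine computation is this reduction to vertices, i.e.\ combining convexity of $\epsilon P$ with the central symmetry of the dilated parallelogram.
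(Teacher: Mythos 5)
Your proof is correct and follows essentially the same route as the paper: the identity $\bigcup_{s\in\Sigma}(s+P)=(1+2w)P$, reduction to the vertices of $\epsilon P$ by convexity and symmetry, and the change of basis from $\{1,\imath\}$ to $\{1,\tilde{\epsilon}\}$ via the matrix $A$. You merely supply more justification than the paper does for the tiling identity and the vertex reduction, which the paper states without further comment.
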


\begin{proof}
 Note that
 \[\bigcup_{s\in\Sigma}(s+P)=(1+2w) P.\]
 Therefore it suffices to prove that all the vertices of $\epsilon P$, namely $\frac{\tilde{\epsilon}^2}2(\pm1\pm\tilde{\epsilon})$,
 lie within the parallelogram $(1+2w) P$. In order to check this, it is convenient to consider $1,\tilde{\epsilon}$ as a basis of $\C$ over $\R$ instead of
$1,\imath$. If $A$ is as above, we have $z=a+b\imath=c+d\tilde{\varepsilon}$, where
$ \left(\begin{smallmatrix}c
 \\d\end{smallmatrix}\right) = A^{-1} \left(\begin{smallmatrix}a
 \\b\end{smallmatrix}\right)$.
 Hence $z$ lies within $(1+2w) P$, if $|c|,|d|\leq \frac{1+2w}2$.
\end{proof}

Let us note that the bound for $w$ obtained in Lemma \ref{lem:criterion_mu_2} depends on which embedding $K\hookrightarrow \C$ we chose.
For instance in the case that $K$ is the number field with minimal polynomial $X^4+2X^2-2X+1$ one obtains either $w=2$ or $w=4$ depending on the choice of the
actual embedding $K\hookrightarrow\C$. However since the unit sum height $\omega(K)$ does not depend on the embedding we can choose $\tilde \epsilon$ such that
in view of Lemma~\ref{lem:criterion_mu_2} the quantity $w$ is minimal.

Once we have chosen the optimal embedding $K\hookrightarrow \C$ we can proceed as before and we only give a few details on the applications of
Lemma~\ref{lem:criterion_mu_2} and Theorem~\ref{Th:USHUppBound}. In particular, see Table \ref{Tab:special} below for details.

\begin{table}[ht]
\caption{Details to the computations in case $K$ has the following minimal polynomial.}\label{Tab:special}
\begin{tabular}{|c|c|c|c|c|}
\hline minimal polynomial & $w$ & $\tilde\epsilon$ & $C$ & $B$ \\\hline\hline
$X^4-X+1$ & $2$ & $0.727+0.934\imath$ & $112$ & $7$ \\\hline
$X^4-X^3+X^2+X+1$ & $3$ & $-0.933 + 1.132\imath$ &  $42$ & $4$ \\\hline
$X^4-X^3+X+1$ & $3$ & $-1.066 + 0.864\imath$ & $56$ & $4$ \\\hline
$X^4+2X^2-2X+1$ & $2$ & $0.475+1.509\imath$ & $18$ & $4$ \\\hline
$X^4-X^3+2X^2-X+2$ & $2$ & $0.204+1.664\imath$ & $12$ & $3$ \\\hline
\end{tabular}
\end{table}

\section{A combinatorial approach}\label{Subsec:combinatorial}

The aim of this section is to prove that $K=\Q(\gamma)$ is DUG, where $\gamma$ is a root of the polynomial $X^4-X+1$. Although we already
proved in the previous section that $\omega(K)\leq 2$ we do not assume this result in this section. Independently from the rest of the paper we prove:

\begin{proposition}\label{thm_combinatorial_dug}
The field $K=\Q(\gamma)$ with $\gamma$ being a root of the polynomial $X^4-X+1$ is DUG.
\end{proposition}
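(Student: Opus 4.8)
The plan is to prove that $K=\Q(\gamma)$ with $\gamma^4-\gamma+1=0$ is DUG by an explicit combinatorial argument that writes every algebraic integer as a sum of distinct units. First I would establish the arithmetic data of $K$: the field is totally complex quartic, so it has unit rank $1$ and contains no roots of unity beyond $\pm1$. I would fix the fundamental unit $\tilde\epsilon$ with $|\tilde\epsilon|>1$ (the same one appearing in Table~\ref{Tab:special}) and record an integral basis, most naturally $\{1,\gamma,\gamma^2,\gamma^3\}$ since $\gamma$ is an algebraic integer and the discriminant of $X^4-X+1$ is squarefree, so $\ord=\Z[\gamma]$. Because the field contains no nontrivial roots of unity, the only obstructions to being DUG come from sign patterns: the units available are essentially $\pm\tilde\epsilon^k$, and the combinatorial problem is to control carries when adding such signed powers.

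The core of the argument would be a reduction lemma in the spirit of Theorem~\ref{Th:USHUppBound}, but carried out by hand rather than through the covering criterion. I would pick the digit set $\Sigma=\{d_0+d_1\tilde\epsilon : -w\le d_0,d_1\le w\}$ and base $\epsilon=\tilde\epsilon^2$ as in Section~\ref{Sec:special}, and then argue directly that every $\alpha\in\ord$ admits a representation $\alpha\epsilon^N=\beta+\sum_{i=0}^n c_i\epsilon^i$ with $c_i\in\Sigma$ and $\beta$ ranging over the finite set of critical points. The combinatorial heart is then to show that each of the $C=112$ critical points (from Table~\ref{Tab:special}) can itself be rewritten as a sum of \emph{distinct} units, i.e.\ with all digits in $\{0,1\}$ after passing to a genuine sum of distinct $\tilde\epsilon^k$. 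Concretely I would exhibit, for each critical point $\beta$, an identity $\beta=\sum_{k} \pm\tilde\epsilon^{k}$ with distinct exponents, using short relations among small powers of $\tilde\epsilon$ coming from the minimal polynomial; the key algebraic input is the recurrence $\gamma^4=\gamma-1$ translated into a relation for $\tilde\epsilon$, which lets one trade a coefficient of $2$ or $-1$ at one power for a combination of distinct units at neighboring powers.

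The main obstacle I anticipate is exactly this last step: controlling the \emph{distinctness} of the units while reducing coefficients, since naive carrying produces repeated powers of $\epsilon$ (equivalently, a digit exceeding $1$), which violates the DUG requirement. The way I would handle this is to prove a finite closure statement: show that the set of elements expressible as sums of distinct units is closed under the operations $x\mapsto x+\tilde\epsilon^k$ for the finitely many relevant $k$, by providing explicit rewriting rules that resolve a collision $\tilde\epsilon^k+\tilde\epsilon^k$ into $\sum_j \tilde\epsilon^{k_j}$ with the $k_j$ distinct and disjoint from the ambient support. Because $X^4-X+1$ gives the compact relation $\gamma^4+1=\gamma$, such rules should exist with bounded exponent shift, so the rewriting terminates; verifying termination and disjointness of supports is the delicate bookkeeping.

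Finally I would assemble the pieces: by the reduction, every $\alpha\in\ord$ equals a sum of distinct units up to a critical-point remainder $\beta$, and by the closure/rewriting step every critical point is itself a sum of distinct units supported on exponents disjoint from the main expansion (this is what the $B$-column in Table~\ref{Tab:special} encodes, guaranteeing the critical points expand only in negative powers of $\epsilon$). Combining the two disjoint-support expansions yields $\alpha$ as a sum of distinct units, and since $\alpha\in\ord$ was arbitrary this proves $\omega(K)=1$, i.e.\ $K$ is DUG. I expect the write-up to be dominated by the explicit table of rewriting identities for the collisions and by checking that the negative-power expansions of the critical points never overlap the nonnegative-power part, which is where an honest proof must be careful rather than merely invoking the earlier covering theorem.
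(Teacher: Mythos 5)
Your overall architecture differs from the paper's: the paper proves this proposition by a self-contained rewriting argument on $\gamma$-representations (base $\gamma$, rewriting rule $100\bar 1 1$ coming from $\gamma^4-\gamma+1=0$), explicitly \emph{not} reusing the covering machinery of Theorem~\ref{Th:USHUppBound}, whereas you propose to route the proof through the setup of Section~\ref{Sec:special}. The difficulty is that for this field the covering criterion of Lemma~\ref{lem:criterion_mu_2} only holds with $w=2$ (see Table~\ref{Tab:special}), so the digits $c_i\in\Sigma=\{d_0+d_1\tilde\epsilon \: :\: -2\le d_0,d_1\le 2\}$, and likewise the expansions of the $112$ critical points, carry coefficients of absolute value $2$. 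That machinery therefore delivers $\omega(K)\le 2$ and nothing more; this is exactly why the paper opens Section~\ref{Subsec:combinatorial} by stating that it will not assume the $\omega(K)\le 2$ result. Everything you add on top of it --- the ``finite closure statement'' resolving a collision $\tilde\epsilon^k+\tilde\epsilon^k$ into distinct units with controlled support --- is not supplementary bookkeeping but the entire content of the proposition, and you assert it rather than prove it.

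Concretely, two things are missing. First, termination: resolving a digit $2$ locally via $\gamma^{n+4}=\gamma^{n+1}-\gamma^n$ creates new nonzero digits at neighboring positions and can create new collisions; the paper has to introduce derived rules with long supports ($10000003000001$, $100010010\bar 1 1$, $11100001$) and a weight function $W(v)=\sum_n|v_n|$ that strictly decreases under a carefully chosen order of application (cases (a)--(h) of Lemma~\ref{lem_getsparse}) in order to guarantee that the process stops. Second, the normal form one terminates at still contains digits $\pm 2$, only in tightly constrained configurations (conditions (i)--(v) of that lemma), and eliminating those requires a separate induction on the number of occurrences of $\pm2$ with an explicit case table for each possible spacing $\Delta_i$. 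Without exhibiting your rewriting identities and proving a decreasing invariant, the proposal is a plan for a proof rather than a proof; and the plan as stated (local collision resolution with ``bounded exponent shift'') does not obviously terminate, since a single substitution can increase the number of nonzero digits.
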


Since the maximal order $\ord$ of $K$ is of the form $\ord=\Z[\gamma]=\Z[\gamma,\gamma^{-1}]$ we can write every element $\alpha\in\ord$ in the form
\begin{equation}\label{Eq:Representationalpha}
 \alpha=\sum_{n=-\infty}^{\infty} v_n\gamma^n
\end{equation}
with $v_n\in\Z$ and $v_n\neq 0$ for at most finitely many indices. Such a $\gamma$-representation of $\alpha$ is sometimes written
$$
\alpha=\cdots v_2v_1v_0{\scriptstyle \bullet} v_{-1} v_{-2}\cdots\,,
$$
where the fractional point ${\scriptstyle \bullet}$ separates between the coefficients at negative and non-negative powers of the base $\gamma$. We are only
interested in the fact that non-vanishing coefficients in the $\gamma$-representation are finitely many. Thus we will abbreviate representation
\eqref{Eq:Representationalpha} by the finite word
$v_kv_{k-1}\cdots v_{\ell+1}v_\ell$, where the indices $k$ and $\ell$ are such that $v_n=0$ for all $n>k$ and all $n<\ell$, without marking the fractional
point. Note that the $\gamma$-representation is not unique. Since $\gamma^n(\gamma^4-\gamma+1)=0$ for all $n$, position-wise  addition or
subtraction of $100\bar 1 1$, with $\bar 1=-1$, at any position does not change the value of $\alpha$ but only its $\gamma$-representation, i.e. the words
$v_k\cdots v_nv_{n-1}v_{n-2}v_{n-3}v_{n-4}\cdots v_\ell$ and $v_k\cdots (v_n+1)v_{n-1}v_{n-2}(v_{n-3}-1)(v_{n-4}+1)\cdots v_\ell$ represent the same element
$\alpha\in\ord$.

From this point of view any element $\alpha\in\ord$ has some $\gamma$-representation
\begin{equation}\label{Eq:representation_overZ}
x_3x_2x_1x_0\,,\quad x_i\in\Z\,
\end{equation}
and if $\alpha$ is also a sum of distinct units, there exists another $\gamma$-representation of the form
\begin{equation}\label{Eq:representation_smalldigits}
v_kv_{k-1}\cdots v_0v_{-1}\cdots v_\ell \,, \quad v_i\in\{\overline{1},0,1\}, \ell,k\in\Z\,.
\end{equation}
Hence, if we want to prove that the field $K$ is DUG, we have to show that any representation of the form \eqref{Eq:representation_overZ} can be rewritten
into \eqref{Eq:representation_smalldigits} without changing the value of the represented number.

\begin{definition}
Let $\mathcal{A}\subseteq\Z$ be an alphabet and let $w\in\mathcal{A}^*$ be a finite word. We say that the word $u\in\mathcal{A}^*$ can be rewritten by
$w$ to $v\in\mathcal{A}^*$, if it is possible to obtain $v$ from $u$ by finitely many position-wise additions or subtractions of shifts of $w$. We
denote this by $u\leftrightarrow_w v$ or just $u\leftrightarrow v$, if $w$ is understood. In this context we call $w$ the rewriting rule.

Moreover let $v=v_k\cdots v_\ell \in\mathcal A^*$ be a finite word, then we denote by
\[W(v)=\sum_{n=\ell}^k |v_n|\]
the weight of $v$.
\end{definition}

Let us note that the symbol $\Z^*$ bears some ambiguity. It may be the set of finite words with alphabet $\Z$ or it may denote the set $\{\pm 1\}$, which is
the
group of units of $\Z$. Since from the context the meaning of $\Z^*$ is always clear in this paper we allow this ambiguity.

In view of Proposition \ref{thm_combinatorial_dug} let us fix $w=100\bar 1 1$. If $u$ and $v$ are $\gamma$-representations with $u\leftrightarrow_w v$,
then $u$ and $v$ represent the same element $\alpha\in\ord$, as explained above. Hence Proposition \ref{thm_combinatorial_dug} is equivalent to the following:

\begin{proposition}\label{thm_combinatorial_dugV2}
For every word $x_3x_2x_1x_0\in\Z^*$ there exists a word $v\in\{\overline{1},0,1\}^*$ such that
\[x_3x_2x_1x_0 \leftrightarrow_w v \,.\]
\end{proposition}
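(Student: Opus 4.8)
The plan is to work entirely on the level of words and to show that any $x_3x_2x_1x_0\in\Z^*$ can be rewritten, by repeated use of the rule $w=100\overline11$, into a word all of whose letters lie in $\{\overline1,0,1\}$. The first thing I would record is the precise arithmetic meaning of the rule. Reading $w$ with its rightmost letter at position $n$, the relation $\gamma^n-\gamma^{n+1}+\gamma^{n+4}=0$ is equivalent to
\[\gamma^n+\gamma^{n+4}=\gamma^{n+1},\]
so a single subtraction of a shift of $w$ lowers one chosen letter $v_n$ by $1$ while replacing $(v_{n+1},v_{n+4})$ by $(v_{n+1}+1,v_{n+4}-1)$, whereas the shift aligned four places to the left instead replaces $(v_{n-4},v_{n-3},v_n)$ by $(v_{n-4}-1,v_{n-3}+1,v_n-1)$. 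Thus every letter can be pushed towards $0$ in two different ``directions''. I would also record the symmetry $u\leftrightarrow_w v\Rightarrow -u\leftrightarrow_w -v$, which lets me treat a letter $\ge 2$ and a letter $\le -2$ by one and the same argument.

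The engine of the proof is the reduction of a single oversized letter. The prototype identity is
\[2\gamma^n=\gamma^n+\gamma^{n+1}-\gamma^{n+4},\]
whose right-hand side already has letters in $\{\overline1,0,1\}$; more generally, subtracting the rule at position $n$ whenever $v_n\ge 2$ lowers $|v_n|$ by one. I would set up an induction on the weight $W$ of the word, with a lexicographic tie-breaker recording, say, the number of letters of maximal absolute value together with the position of the rightmost such letter. The base case is a word already over $\{\overline1,0,1\}$. In the inductive step I would locate an extremal letter with $|v_n|\ge 2$ and choose, among the two available alignments of $w$, one for which at least one of the two affected neighbouring letters decreases in absolute value. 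Such a move changes $W$ by $-1$ at position $n$, by $-1$ at the neighbour that shrinks, and by at most $+1$ at the other, hence strictly decreases $W$; the induction hypothesis then finishes the word.

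The main obstacle is precisely that such a weight-decreasing move need not exist. Subtracting the rule at $v_n$ can land on a position $n+1$ that is already non-negative and a position $n+4$ that is already non-positive, so both neighbours grow; and the left-aligned shift can be blocked in the symmetric way, when $v_{n-4}\le 0$ and $v_{n-3}\ge 0$. Since the rule carries a letter of each sign, a local reduction may propagate large letters to fresh positions instead of cancelling them, and $W$ is genuinely not monotone (e.g.\ the isolated letter $2$ of weight $2$ reduces only to $\overline10011$ of weight $3$). The real work therefore lies in controlling this propagation, and I would attempt it in one of two ways. The first is to replace the crude weight by a position-weighted potential under which pushing a letter ``outward'' strictly decreases the potential, forcing the created large letters to march monotonically in a single direction and eventually be absorbed against the boundary of the support. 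Failing a clean potential, the second is to organise the rewriting as one left-to-right sweep that leaves behind only letters of $\{\overline1,0,1\}$ while transporting a bounded ``remainder'' between consecutive positions, so that the whole procedure is governed by a finite transducer whose finitely many reachable remainder states can be verified directly, in the spirit of the finite computations underlying Tables~\ref{Tab:fourth}--\ref{Tab:special}. Proving that whichever scheme is chosen terminates—equivalently, that its bounded local states cannot cycle—is the crux of Proposition~\ref{thm_combinatorial_dugV2}.
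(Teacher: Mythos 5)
Your proposal correctly identifies the central difficulty --- that the naive weight $W$ is not monotone under single applications of $w=100\overline{1}1$ --- but it stops exactly there: you name two possible remedies (a position-weighted potential, or a left-to-right transducer sweep) and then concede that proving termination of either ``is the crux''. That crux is the entire content of the proposition, so what you have written is an analysis of the problem rather than a proof. The missing idea, which the paper supplies, is to work not with the single rule $w$ but with \emph{compound} rewriting rules obtained by superposing several shifts of $w$, namely $w_2=10000003000001$, $w_3=100010010\overline{1}1$ and $w_4=11100001$. These are chosen so that in every local configuration violating a certain normal form (a digit of absolute value $\geq 3$, or two nearby digits of equal or opposite sign at specified distances) one of the rules can be applied so as to \emph{strictly} decrease $W$; the only delicate cases are two of the eight, where an application that fails to decrease the weight can be replaced by a different one that does. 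This yields Lemma \ref{lem_getsparse}: every word rewrites to one over $\{\overline{2},\overline{1},0,1,2\}$ satisfying strong separation conditions (i)--(v).

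The second missing step is the treatment of the residual digits $\pm 2$. The separation conditions force any two consecutive occurrences of $\pm 2$ either to form one of finitely many explicit factors or to lie at least six positions apart, and the proposition is then proved by induction on the number $N$ of occurrences of $\pm 2$, sweeping left to right and rewriting each factor $020$ (resp.\ $0\overline{2}0$) into $011$, $110$ or $010$ via an explicit finite table of cases indexed by the gap $\Delta_i\in\{2,3,4,5,{\geq}6\}$ to the next $\pm 2$. Your ``finite transducer'' suggestion is in the right spirit for this second stage, but without the normal form of Lemma \ref{lem_getsparse} the set of local states is not finite and the case analysis cannot even be set up. In short, the two-stage structure --- weight-decreasing compound rules producing a sparse normal form, followed by a finite induction on the occurrences of $\pm 2$ --- is the proof, and neither stage is carried out in your proposal.
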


Note that for two digits $a$ and $b$, we denote by $ab$ their concatenation and  by $a\cdot b$ standard multiplication. In order to prove Proposition
\ref{thm_combinatorial_dugV2} we need the following lemma.

\begin{lemma}\label{lem_getsparse}
Let $u\in\Z^*$ be a finite word. Then $u\leftrightarrow_{w} v=v_kv_{k-1}\cdots v_{\ell+1}v_\ell$, where $v$ fulfills the following conditions:
\begin{enumerate}
\item[(i)] $v_i\in\{\overline{2},\overline{1},0,1,2\}$ for all $i\in\{k,\dots,\ell\}$
\item[(ii)] $v_{i+m}\cdot v_i>0 \Rightarrow m\notin\{1,2,4\}$
\item[(iii)] $v_{i+m}\cdot v_i<0 \Rightarrow m\notin\{1,3\}$
\item[(iv)] $v_{i+2}\cdot v_i<0 \Rightarrow v_{i+4}=v_{i+5}=0$
\item[(v)] $v_{i+3}\cdot v_i>0 \Rightarrow v_{i+6}=0$
\end{enumerate}
\end{lemma}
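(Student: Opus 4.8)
The plan is to prove Lemma~\ref{lem_getsparse} by an explicit normalisation procedure: starting from $u$, I would repeatedly apply shifts of the rewriting rule $w=100\bar 1 1$ to destroy the forbidden local configurations listed in (i)--(v), and control the process by a well-founded monovariant so that it terminates at a word of the required shape. It is convenient to view a shift of $w$ placed with its leading symbol at position $p$ as the operation that sends $(v_p,v_{p-3},v_{p-4})$ to $(v_p\pm1,v_{p-3}\mp1,v_{p-4}\pm1)$ and leaves all other coordinates fixed; since $\gamma^{p-4}(\gamma^4-\gamma+1)=0$, every such step preserves the represented element. The principal monovariant is the weight $W(v)=\sum_n|v_n|$, refined when necessary by a secondary positional measure.

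First I would establish (i). Processing the positions from the top downwards, whenever the highest index $i$ with $|v_i|\ge 3$ occurs I subtract $\sign(v_i)\,w$ with leading symbol at $i$; this lowers $|v_i|$ by one and spills bounded carries only to the strictly lower positions $i-3$ and $i-4$. Because an operation with leading symbol at $i$ never alters any coordinate above $i$, the topmost out-of-range position is non-increasing, while the same rule applied with a higher leading position can pull content that was driven below position $0$ back up to positions $0$ and $3$, keeping the support inside a bounded window. Termination of this carry propagation is the point that needs the arithmetic of $\gamma$: I expect to close it with the weighted weight $\Phi(v)=\sum_n t^n|v_n|$ for a fixed $t$ with $t^4>t+1$, which strictly decreases at every digit-reduction step, together with the boundedness of the support just described.

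With (i) in force I would remove the remaining configurations (ii)--(v) by a finite catalogue of local rewrites obtained from $w$ and its iterates. Several of these are immediate weight reductions; for instance two equal-sign digits at distance $4$ (the case $m=4$ of (ii)) are cancelled by a single application of $w$ with leading symbol at the upper of the two positions, which deletes both units while creating at most one new unit at the intermediate position $i+1$, so that $W$ strictly drops. The opposite-sign gap conditions (iii)--(iv) and the distance-$3$ condition (v) are handled by the analogous translates of $w$; whenever a rewrite cannot lower $W$ it can be arranged to leave $W$ unchanged while strictly decreasing the secondary positional measure, so the refined monovariant still descends. Carrying this out is a matter of listing the finitely many forbidden windows and checking, case by case, that each admits such a rewrite.

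The hard part is not any individual rewrite but the global termination and non-interference: because the support of $w$ sits at the non-adjacent offsets $\{0,3,4\}$, a rewrite that repairs one forbidden distance simultaneously perturbs the coordinates governing the others, so eliminating a violation of one of (ii)--(v) can create violations of the remaining conditions. I would therefore spend the main effort designing the composite monovariant (weight first, then a positional tie-breaker) and verifying that \emph{every} repairing rewrite strictly decreases it, which both forces the catalogue of reductions to be finite and guarantees that the procedure halts at a word satisfying (i)--(v) simultaneously.
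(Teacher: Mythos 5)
Your overall strategy (repair forbidden local windows by shifts of $w$ and force termination with a monovariant) is the right one, but the monovariant you propose for establishing (i) does not work, and this is exactly where the difficulty of the lemma sits. Reducing an isolated digit $|v_i|\ge 3$ by a single shift of $w_1=100\bar 11$ lowers $|v_i|$ by one while spilling carries to $i-3$ and $i-4$, so the plain weight $W$ can grow and you fall back on $\Phi(v)=\sum_n t^n|v_n|$. But $\Phi$ takes values in a non-well-ordered subset of $\R_{>0}$, so strict decrease of $\Phi$ does not imply termination; and your supplementary claim that the support stays in a bounded window fails for the very moves you describe: starting from the single digit $3$ at position $0$ and repeatedly subtracting $w$ at the leading position gives $2\,00\,1\bar 1$, then $1\,00\,2\bar 2$, then $0\,00\,3\bar 3$, pushing ever larger digits to ever lower positions while $\Phi$ decreases monotonically and the process never halts. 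Worse, since $\gamma^{13}+3\gamma^6+1=0$ one has $3=-\gamma^{7}-\gamma^{-6}$, so the admissible representation of $3$ has far larger $\Phi$ than the input word; decreasing $\Phi$ and keeping the support bounded are incompatible goals here. The paper sidesteps all of this by deriving from $w$ the rule $w_2=10000003000001$ (the relation $\gamma^{13}+3\gamma^{6}+1=0$), which removes three units from a large digit at the cost of only two unit carries, so the ordinary integer-valued weight $W$ strictly drops and termination is immediate.

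There is a second gap in your treatment of (ii)--(v): a single translate of $w_1$ alters positions at mutual distances $1$ (opposite signs), $3$ (opposite signs) and $4$ (same sign) only, so it can repair same-sign pairs at distance $4$ and opposite-sign pairs at distance $1$ or $3$, but it cannot touch same-sign pairs at distance $1$ or $2$, nor the three-position conditions (iv) and (v); your phrase ``handled by the analogous translates of $w$'' is therefore inaccurate. The actual content of the paper's proof is the exhibition of the further derived rules $w_3=100010010\bar 11$ and $w_4=11100001$ and a case-by-case check that every forbidden window admits an application of $w_1,\dots,w_4$. You do correctly anticipate the one subtlety --- that the rewrites for same-sign digits at distance $1$ or $2$ may leave $W$ unchanged --- but the paper resolves it without any secondary positional measure: it shows that in the weight-neutral case an opposite-sign adjacent pair must already be present, so the strictly weight-decreasing rule for that configuration applies instead. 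To complete your argument you would need to produce the derived rules explicitly and replace the non-well-founded $\Phi$ by an integer-valued strictly decreasing quantity.
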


\begin{proof}
Multiple application of the original rewriting rule $w=w_1=100\overline{1}1$ gives rise to other useful ones, in particular $w_2=10000003000001$,
$w_3=100010010\overline{1}1$ and $w_4=11100001$. We prove the lemma by showing that these rewriting rules can be used in such a manner, that they
decrease the weight $W(u)$ of the word $u$ in every rewriting step until $u$ satisfies the conditions of the lemma. We apply the rewriting rules $w_1,w_2,w_3$
or $w_4$ in the following situations (the underlined digits indicate which digits we want to ``reduce'' in order to obtain a smaller weight):
\begin{enumerate}
\itemsep 2mm
\item[(a)] If $|v_i|\geq 3$, then
\[v_{i+7}v_{i+6}\cdots \underline{v_i}\cdots v_{i-5}v_{i-6} \leftrightarrow_{w_2} (v_{i+7}\pm 1)v_{i+6}\cdots \underline{(v_i\pm 3)}\cdots v_{i-5}(v_{i-6}\pm
1).\]

\item[(b)]
If $v_{i+1}\cdot v_i<0$, then
\[v_{i+4}v_{i+3}v_{i+2}\underline{v_{i+1}v_i} \leftrightarrow_{w_1} (v_{i+4}\mp 1)v_{i+3}v_{i+2}\underline{(v_{i+1}\pm 1)(v_i \mp 1)}.\]

\item[(c)]
If $v_{i+3}\cdot v_i<0$, then
\[\underline{v_{i+3}}v_{i+2}v_{i+1}\underline{v_i}v_{i-1} \leftrightarrow_{w_1} \underline{(v_{i+3}\pm 1)}v_{i+2}v_{i+1}\underline{(v_i\mp 1)}(v_{i-1} \pm
1).\]

\item[(d)]
If $v_{i+4}\cdot v_i>0$, then
\[\underline{v_{i+4}}v_{i+3}v_{i+2}v_{i+1}\underline{v_i} \leftrightarrow_{w_1} \underline{(v_{i+4}\pm 1)}v_{i+3}v_{i+2}(v_{i+1}\mp
1)\underline{(v_i \pm 1)}.\]

\item[(e)]
If $v_{i+2}\cdot v_i<0$ and $v_{i+5}\cdot v_i<0$, then
\begin{multline*}
v_{i+9}\cdots \underline{v_{i+5}}v_{i+4}v_{i+3}\underline{v_{i+2}}v_{i+1}\underline{v_i}v_{i-1} \leftrightarrow_{w_3}\\
(v_{i+9}\pm 1)\cdots \underline{(v_{i+5}\pm 1)}v_{i+4}v_{i+3}\underline{(v_{i+2}\pm 1)}v_{i+1}\underline{(v_i\mp 1)}(v_{i-1}\pm 1).
\end{multline*}

\item[(f)]
If $v_{i+3}\cdot v_i>0$ and $v_{i+6}\cdot v_i>0$, then
\begin{multline*}
v_{i+10}\cdots \underline{v_{i+6}}v_{i+5}v_{i+4}\underline{v_{i+3}}v_{i+2}v_{i+1}\underline{v_i} \leftrightarrow_{w_3}\\
(v_{i+10}\pm 1)\cdots \underline{(v_{i+6}\pm 1)}v_{i+5}v_{i+4}\underline{(v_{i+3}\pm 1)}v_{i+2}(v_{i+1}\mp 1)\underline{(v_i\pm 1)}.
\end{multline*}

\item[(g)]
If $v_{i+1}\cdot v_i>0$, then
\[\underline{v_{i+1}v_i}v_{i-1}v_{i-2}\cdots v_{i-6} \leftrightarrow_{w_4} \underline{(v_{i+1}\pm 1)(v_i\pm 1)}(v_{i-1}\pm 1)v_{i-2}\cdots (v_{i-6}\pm
1).\]

\item[(h)] If $v_{i+2}\cdot v_i>0$, then
\[\underline{v_{i+2}}v_{i+1}\underline{v_i}v_{i-1}\cdots v_{i-5} \leftrightarrow_{w_4} \underline{(v_{i+2}\pm 1)}(v_{i+1}\pm 1)\underline{(v_i\pm
1)}v_{i-1}\cdots (v_{i-5}\pm 1).\]
\end{enumerate}

Observe that in the first six cases the weight strictly decreases. Let us emphasize here that if an application of (g) does not decrease the weight of the
word, then $v_{i}\cdot v_{i-1}<0$. Therefore we can apply (b) instead of (g) with the index $i$ replaced by $i-1$ and the application of (b) reduces the
weight. Similarly if an application of (h) does not decrease the weight of the word, then $v_{i+1}\cdot v_{i}<0$ and again we can apply (b)
instead of (h). Therefore if an application of the rules (a)--(h) is possible, we can choose an application that strictly decreases the weight. So after
finitely many steps we obtain a word over the alphabet $\{\overline{2},\overline{1},0,1,2\}$ which cannot be further rewritten by the rules (a)--(h). But a word
that cannot be rewritten by any of the rules (a)--(h), satisfies the conditions of the lemma.
\end{proof}

Now let us turn to the proof of Proposition \ref{thm_combinatorial_dugV2}.

\begin{proof}[Proof of Proposition \ref{thm_combinatorial_dugV2}]
According to Lemma \ref{lem_getsparse} we may assume that any $\alpha\in\Z[\gamma]$ is represented by a word $v=v_kv_{k-1}\cdots
v_{\ell+1}v_\ell\in \{\overline{2},\overline{1},0,1,2\}^*$ satisfying the
requirements of Lemma~\ref{lem_getsparse}. We have to show that $v\leftrightarrow u$ with $u\in\{\overline{1},0,1\}^*$. This
will be achieved by reading the word $v$ from left to right and rewriting some of its parts whenever the digit $\pm 2$ is encountered.

The conditions on $v$, i.e. the conditions (i)--(v) of Lemma \ref{lem_getsparse}, imply that for any two consecutive occurrences of digits $\pm 2$, either the
shortest factor of $v$ containing these two digits must
belong to the set
$$
F=\{\pm(20\overline{2}), \pm(2002), \pm(2000\overline{2}), \pm(200002), \pm(20000\overline{2}), \pm(20\overline{1}00\overline{2})\}
$$
or these two occurrences of the digit $\pm 2$ are at least six positions apart. Moreover, also the occurrences of digits $\pm 1$ are severely limited. As we
will consider only a neighborhood $v_{j+7}\cdots v_{j-1}$ for each occurrence $v_j=\pm 2$ and since every $\pm 2$ is contained in a factor $\pm(020)$, it
suffices to rewrite factors from $F$ and ``isolated'' $\pm 2$'s. In order to prove Proposition \ref{thm_combinatorial_dugV2} it is enough to prove the
following claim:

\begin{claim}
 Let $v=v_k\dots v_\ell$ be a word satisfying the conditions of Lemma \ref{lem_getsparse}. Then $v\leftrightarrow_{100\bar 1 1} v'$ with
$v'\in\{1,0,\overline{1}\}^*$ and the factor $020$, resp. $0\bar20$, which is at the right most position $i$, is rewritten into $011, 110$ or $010$,
($0\bar1\bar1, \bar1\bar10$ or $0\bar10$ respectively).
Moreover the digits of $v$ and $v'$ are equal for all indices $<i-1$.
\end{claim}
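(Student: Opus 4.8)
The plan is to prove the Claim by induction on the number of digits equal to $\pm 2$ in $v$ (equivalently, on the weight $W(v)$), reducing everything to a finite and explicit local analysis around the rightmost occurrence of $\pm 2$. Applying the rule $w=100\bar 1 1$ to $-v$ if necessary, I may assume the rightmost $\pm 2$ equals $+2$ and sits at position $i$; since consecutive positions cannot both be nonzero (conditions (ii) and (iii) of Lemma~\ref{lem_getsparse} both forbid distance $1$), this $+2$ lies inside a factor $020$ at positions $i+1,i,i-1$. Because $i$ is the rightmost $\pm 2$, every digit at a position $<i$ already lies in $\{\bar 1,0,1\}$, so only positions $\geq i-1$ still need to be cleaned, which matches the \emph{moreover} clause.

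First I would extract the local constraints. Evaluating conditions (ii)--(v) of Lemma~\ref{lem_getsparse} at $v_i=2$ forces $v_{i\pm 1}=0$, $v_{i\pm 2}\leq 0$, $v_{i\pm 3}\geq 0$, $v_{i\pm 4}\leq 0$, together with the extra vanishing statements of (iv) and (v). Combined with the observation preceding the Claim---that two consecutive $\pm 2$'s are either six or more positions apart, or form one of the factors of $F$---this pins the window $v_{i+7}\cdots v_{i-1}$ down to a finite, explicitly enumerable list of admissible patterns.

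The core of the argument is the local rewriting. Among all shifts of $w$, the only ones whose support meets position $i$ while avoiding every position $\leq i-2$ are those anchored at $i$ (modifying $v_{i+4},v_{i+1},v_i$) and at $i-1$ (modifying $v_{i+3},v_i,v_{i-1}$). Thus I may only use $\pm w$ at these two anchors, and each use changes $v_i$ by $1$ while depositing a carry $\pm 1$ at position $i+4$ or $i+3$. Concretely, subtracting $w$ at anchor $i$ turns $020$ into $110$ (carry $-1$ at $i+4$), adding $w$ at anchor $i-1$ turns $020$ into $011$ (carry $+1$ at $i+3$), and a suitable combination yields $010$. Since $v_{i+3}\geq 0$ and $v_{i+4}\leq 0$, for each admissible window I would choose the target among $\{011,110,010\}$ (or its negative), possibly after one preparatory move, so that the deposited carry either stays inside $\{\bar 1,0,1\}$ or is pushed strictly above position $i$. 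This removes the $+2$ at $i$, freezes all positions $<i-1$, and leaves a word to which the induction hypothesis applies, yielding $v'\in\{1,0,\bar 1\}^*$.

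The main obstacle is carry control, and certifying it is exactly the job of the finite case analysis. The term $w_4=1$ forces any reduction of a $2$ to push a unit four places to the left, which can upgrade a neighboring $\bar 1$ or $1$ into a fresh $\pm 2$; a priori this could relocate the problem leftward indefinitely, and a careless move can even produce a digit $\pm 3$ outside the alphabet $\{\bar 2,\dots,2\}$. The case analysis over the windows dictated by $F$ and by the isolation condition must therefore be arranged so that every chosen move strictly decreases the monovariant (the number of $\pm 2$'s, or $W$) and never leaves $\{\bar 2,\dots,2\}$. Verifying that $F$ together with the ``isolated $\pm 2$'' case genuinely exhausts all windows compatible with (i)--(v), and that a weight-decreasing sequence of the two admissible moves exists in each, is the bookkeeping heart of the proof and the step I expect to be most delicate.
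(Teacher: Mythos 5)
Your high-level framing (induction on the number of occurrences of $\pm 2$, a finite local analysis around the rightmost occurrence at position $i$, using only the two shifts of $w$ anchored at $i$ and $i-1$ so as to freeze all indices $<i-1$) matches the paper, but the order in which you process the word breaks the induction. You clear the rightmost $\pm 2$ \emph{first}, depositing a carry at $i+3$ or $i+4$, and then want to apply the induction hypothesis to the modified word. The Claim's hypothesis, however, is that the word satisfies conditions (i)--(v) of Lemma~\ref{lem_getsparse}, and after your first move it generally does not: turning $020$ into $110$ or $011$ already creates an adjacency forbidden by (ii), and the carry can do worse. Concretely, for the admissible factor $\bar{2}0002$ (i.e.\ $v_{i+4}=\bar 2$, an element of $\pm F$), subtracting $w$ at anchor $i$ sends $v_{i+4}$ to $\bar 3$, leaving the alphabet, while adding $w$ at anchor $i-1$ makes $v_{i+3}=1$ adjacent to $v_{i+4}=\bar 2$, violating (iii); so neither of your two admissible moves leaves a word to which the induction hypothesis applies, and no "choice of target" fixes this. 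Your proposed monovariant also fails: a single anchored move changes $v_i$ from $2$ to $1$ but raises $|v_{i+1}|$ or $|v_{i-1}|$ from $0$ to $1$ and changes $|v_{i+4}|$ or $|v_{i+3}|$ by one, so $W$ need not decrease, and the carry can create a fresh $\pm 2$. Finally, your assertion that "a suitable combination yields $010$" is false: writing the two anchored moves with integer multiplicities $a,b$, keeping $v_{i+1}$ and $v_{i-1}$ fixed forces $a=b=0$, so $020\to 010$ is unreachable by these moves alone; in the paper the output $010$ arises only through interaction with the already-rewritten left part (the $\Delta_i=2$ case).

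The paper resolves exactly this difficulty by inverting the order. It splits $v=u_2u_1$ with the rightmost $\pm 2$ in $u_1$, applies the induction hypothesis to the \emph{untouched prefix} $u_2$ of the original word (which, being a factor of $v$, still satisfies (i)--(v) and has $N-1$ occurrences of $\pm 2$), and uses the strengthened conclusion of the Claim --- that the rightmost $020$ of $u_2$ ends up as $011$, $110$ or $010$ and that nothing below it moves --- to know exactly what sits at positions $i+3,\dots$ before the final move. Only then is the rightmost $\pm 2$ cleared, via the explicit tables for $\Delta_i\geq 6$ and $\Delta_i=5,4,3,2$. This is why the Claim is stated with the extra information about the shape of the rewritten rightmost factor: it is precisely the interface needed to glue the inductively rewritten $u_2'$ to the final local move. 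To repair your argument you would either have to adopt this left-first ordering, or strengthen your induction hypothesis to cover the degraded words your first move produces; as written, the inductive step does not go through.
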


We prove this claim by induction on the number $N$ of appearances of the digits $\pm 2$. Of course the case $N=0$ is trivial.
For each $i$, with $v_i=\pm 2$ we define $\Delta_i=+\infty$ if it is the left most occurrence of the digits $\pm 2$ in $v$ and
$$
\Delta_i=\min\{j>0 : v_{i+j}=\pm 2\}
$$
otherwise. Without loss of generality, assume that $v_i=2$.
First, consider $\Delta_i\geq 6$. In this case we derive from (i)--(v) that we
only have four cases which can be rewritten as indicated below.
\begin{equation}\label{eq:Delta6}\begin{array}{c|c}
v_{i+4}v_{i+3}v_{i+2}|020| & \text{output} \\ \hline
00\overline{1}|020| & \overline{1}0\overline{1}|110| \\
010|020| & \overline{1}10|110| \\
000|020| & \overline{1}00|110| \\
\overline{1}00|020| & \overline{1}10|011|
\end{array}\end{equation}
Note that this also settles the case that $N=1$.

Now let us assume that $N\geq 2$ and that the claim is true for all words with strictly less than $N$ appearances of the digits $\pm 2$. Let us assume that $i$
is
the lowest index such that $|v_i|=2$.

If $\Delta_i\geq 6$, we split $v=u_2u_1$ into the two words $u_2=v_k\dots v_{i+6}v_{i+5}$ and $u_1=v_{i+4}v_{i+3}\dots v_\ell$.
Since $u_2$ has $N-1$ appearances of the digits $\pm 2$ we have by induction $v=u_2u_1\leftrightarrow u_2'u_1$ with
$u_2' \in \{\bar 1,0,1\}^*$. Now applying \eqref{eq:Delta6} we obtain $u_1\leftrightarrow u_1'$ with $u_1' \in \{\bar 1,0,1\}^*$, hence
$v=u_2u_1\leftrightarrow u_2'u_1'=v'$ with $v'\in\{\bar 1,0,1\}^*$.

If $\Delta_i=5$ we split up $v=u_2u_1$ into the two words $u_2=v_k\dots v_{i+5}v_{i+4}$ and $u_1=v_{i+3}v_{i+2}\dots v_\ell$. By induction
$u_2 \leftrightarrow u_2'=v_k'\dots v_{i+4}'$ and $u_2'\in\{\bar 1,0,1\}^*$ is a word ending with $\pm(011), \pm (110)$ or $\pm (010)$. Now the following
computations settle the case:
\begin{equation}\label{eq:Delta5}
\begin{array}{r|r|r}
(v_{i+6})v_{i+5}v_{i+4}|v_{i+3}v_{i+2}|020| & \text{rewriting $u_2$} & \text{final output}\\ \hline
20|00|020|\ & 10|00|020|\ & 1\overline{1}|00|110|\ \\
\ & 11|00|020|\ & 10|00|110|\ \\
\overline{2}0|00|020|\ & \overline{1}0|00|020|\ & \overline{11}|00|110|\ \\
\ & \overline{11}|00|020|\ & \overline{11}|10|011|\ \\
\qquad (0)\overline{2}0|10|020|\ & \overline{1}0|10|020| & \overline{11}|10|110|\ \\
\ & (0)\overline{11}|10|020|\ & (1)\overline{11}|11|011|\
\end{array}
\end{equation}

The case that $\Delta_i=4$ runs analogously. We split up $v=u_2u_1$ into two words $u_2=v_k\cdots v_{i+4}v_{i+3}$ and $u_1=v_{i+2}v_{i+1}\cdots v_\ell$ and
compute
\begin{equation}\label{eq:Delta4}
\begin{array}{c|c|c}
v_{i+4}v_{i+3}|v_{i+2}|020| & \text{rewriting $u_2$} & \text{final output} \\ \hline
\overline{2}0|0|020| & \overline{1}0|0|020| & \overline{1}1|0|011| \\
 & \overline{11}|0|020| & \overline{1}0|0|011|
\end{array}
\end{equation}

Now let us examine the case that $\Delta_i=3$.
We split up $v=u_2u_1$ into two words $u_2=v_k\dots v_{i+3}v_{i+2}$ and
$u_1=v_{i+1}v_{i}\dots v_\ell$ and get
\[\begin{array}{c|c|c}
v_{i+4}v_{i+3}v_{i+2}|020| & \text{rewriting $u_2$} & \text{final output}\\ \hline
020|020| & 011|020| &  \overline{1}11|110| \\
& 110|020| & 010|110| \\
& 010|020| & \overline{1}10|110|
\end{array}\]

We are left with the case $\Delta_i=2$.
In this case we split up $v=u_2u_1$ into $u_2=v_k\cdots v_{i+2}v_{i+1}$ and
$u_1=v_{i}v_{i-1}\cdots v_\ell$. Further, this case implies that $\Delta_{i+2}\geq 4$ and by (iv) also $v_{i+4}=v_{i+5}=0$, i.e. $u_2$ ends in $000\bar20$.
Looking at the possible rewritings of such words from
\eqref{eq:Delta6}, \eqref{eq:Delta5} and \eqref{eq:Delta4} we obtain the following cases:
\[\begin{array}{c|c|c}
v_{i+4}v_{i+3}v_{i+2}0|20| & \text{rewriting $u_2$} & \text{final output} \\ \hline
00\overline{2}0|20| & 00\overline{1}\overline{1}|20| & \overline{1}0\overline{1}0|10| \\
 & 0\overline{11}0|20| & \overline{111}1|10|
\end{array}\]
Therefore the proof of the claim and hence the proof of Proposition \ref{thm_combinatorial_dugV2} is complete.
\end{proof}

\begin{remark}
The method used in the proof of Proposition \ref{thm_combinatorial_dug} is very particular for the field $K=\Q(\gamma)$, where $\gamma$ is a root of the
polynomial $X^4-X+1$, which provided us rewriting rules $w$ with low weight but large support. We failed in proving an analogous result to Lemma 
\ref{lem_getsparse} for the remaining cases of Theorem \ref{Th:main}, since
the corresponding fields seem not to provide such rewriting rules.

As was mentioned in Section~\ref{Sec:context}, the possibility to rewrite any finite word with integer digits into the alphabet $\Sigma=\{-1,0,1\}$ is closely 
connected to the finiteness property~\eqref{eq_finproperty} of numeration systems. Being in general a highly nontrivial problem, only few results are known. 
For example, in~\cite{Frougny-Pelantova-Svobodova:2011}, it was shown that for any algebraic integer $\gamma$ without conjugates on the unit circle there exists 
an alphabet $\Sigma$ of consecutive integers, such that ${\rm Fin}_{\Sigma} (\gamma)$ is closed under addition. This is however very far from stating that 
$\Sigma=\{-1,0,1\}$ is sufficient.
\end{remark}

\section*{Acknowledgement}

This work was supported by the Czech Science Foundation, grant GA\v CR 13-03538S and by the Grant Agency of the Czech Technical University in Prague, grants No.
SGS11/162/OHK4/3T/14 and SGS14/205/OHK4/3T/14. The third author was supported by the Austrian Science Fund (FWF) under the project P~24801-N26.


\def\cprime{$'$}


\begin{thebibliography}{1}

\bibitem{Akiyama-Zaimi:2013}
S.~Akiyama, T.~Za\"\i mi.
\newblock {\em Comments on the height reducing property}
\newblock Cent. Eur. J. Math., 11(9):1616--1627, 2013.

\bibitem{Akiyama-Thuswaldner-Zaimi:2014}
S.~Akiyama, J.~Thuswaldner, T.~Za\"\i mi.
\newblock {\em Characterization of the numbers which satisfy the height reducing property.}
\newblock Indag. Math., to appear (arXiv:1402.1586).

\bibitem{Belcher:1974}
P.~Belcher.
\newblock {\em Integers expressible as sums of distinct units.}
\newblock Bull. Lond. Math. Soc., 6:66--68, 1974.

\bibitem{Belcher:1976}
P.~Belcher.
\newblock {\em A test for integers being sums of distinct units applied to cubic
  fields.}
\newblock J. Lond. Math. Soc., II. Ser., 12:141--148, 1976.

\bibitem{Falconer:FG}
K.~Falconer.
\newblock {\em Fractal geometry}.
\newblock John Wiley \& Sons Ltd., Chichester, 1990.
\newblock Mathematical foundations and applications.

\bibitem{Frougny-Solomyak:1992}
C.~Frougny, B.~Solomyak.
\newblock {\em Finite beta-expansions}.
\newblock Ergodic Theory Dynam. Systems, 12(4):713--723, 1992.

\bibitem{Frougny-Pelantova-Svobodova:2011}
C.~Frougny, E.~Pelantov\'a, M.~Svobodov\'a.
\newblock {\em Parallel addition in non-standard numeration systems}.
\newblock Theor. Comput. Sci., 412:5714--5727, 2011.

\bibitem{Hajdu:2013}
L.~Hajdu and V.~Ziegler.
\newblock {\em Distinct unit generated totally complex quartic fields.}
\newblock Math. Comp., 83(287):1495-1512, 2014.

\bibitem{Jacobson:1964}
B.~Jacobson.
\newblock {\em Sums of distinct divisors and sums of distinct units.}
\newblock Proc. Am. Math. Soc., 15:179--183, 1964.

\bibitem{Narkiewicz:1974}
W.~Narkiewicz.
\newblock {\em {Elementary and Analytic Theory of Algebraic Numbers}}.
\newblock Number~54 in Monografie matematyczne. PWN - Polish Scientific
  Publishers, Warsaw, 1974.

\bibitem{Sliwa:1974}
J.~\'{S}liwa.
\newblock {\em Sums of distinct units.}
\newblock Bull. Acad. Pol. Sci., 22:11--13, 1974.

\bibitem{Thuswaldner:2011}
J.~Thuswaldner, V.~Ziegler.
\newblock {\em On linear combinations of units with bounded coefficients.}
\newblock Mathematika, 57(2):247--262, 2011.

\end{thebibliography}
\end{document}